\newtheorem{theorem}{Theorem}[section]
\newtheorem{lemma}[theorem]{Lemma}
\newtheorem{corollary}[theorem]{Corollary}
\newtheorem{proposition}[theorem]{Proposition}
\newtheorem{definition}[theorem]{Definition}
\newtheorem{remark}[theorem]{Remark}
\newcounter{theor}
\newtheorem*{rep@theorem}{\rep@title}
\newcommand{\newreptheorem}[2]{%
\newenvironment{rep#1}[1]{%
 \def\rep@title{#2 \ref{##1}}%
 \begin{rep@theorem}}%
 {\end{rep@theorem}}}
\def\conv{\mathop\mathrm{conv}\nolimits}
\def\supp{\mathop\mathrm{supp}\nolimits}
\def\B{\mathbb{B}}
\def\s{\mathbb{S}}
\def\S{\mathcal{S}}
\def\R{\mathbb{R}}
\def\N{\mathbb{N}}
\def\vol{\mathrm{vol}}
\newcommand{\dlat}{\mathrm{d}}
\newcommand{\Mel}[2]{\mathcal{M}_{#1}(#2)}
\def \Rn{{\R^n}}
\newcommand{\hid}{m}
\def \B{B_2^n}
\def \s{\mathbb{S}^{n-1}}
\def \S{\mathbb{S}^{n\hid-1}}
\def \pp{\Pi^{\circ}}
\newcommand {\LYZP}[1]{\PP \langle#1\rangle}
\newcommand {\lc}{\operatorname{LC}_n}
\def \conv{\operatorname{conv}}
\def \PP{\Pi^{\circ,\hid}}
\def \Cov{g_{K,\hid}}
\numberwithin{equation}{section}
\begin{document}
\title[Functional-$m$th-order inequalities]{Higher-Order Reverse Isoperimetric Inequalities for Log-concave Functions}

\author[D. Langharst]{Dylan Langharst}
\address{Carnegie Mellon University\\ Department of Mathematical Sciences \\ Pittsburgh, PA 15213, USA\\ OrcID: 0000-0002-4767-3371}
\email{dlanghar@andrew.cmu.edu}

\author[F. Mar\'in]{Francisco Mar\'in Sola}
\address{Departamento de Ciencias \\ Centro Universitario de la Defensa (CUD) \\ 30729 San Javier (Murcia), Spain \\ OrcID: 0000-0003-1538-7394}
\email{francisco.marin7@um.es} 

\author[J. Ulivelli]{Jacopo Ulivelli}
\address{Institut f\"ur Diskrete Mathematik und Geometrie \\ Technische Universit\"at Wien \\ Wiedner Hauptstraße 8-10, E104-6
\\ 1040 Wien, Austria\\ OrcID: 0000-0002-4726-5271}
\email{jacopo.ulivelli@tuwien.ac.at}

\subjclass[2020]{Primary 26B25, 52A38, 52A40; Secondary 52A20}

\keywords{Rogers-Shephard inequality, Zhang's projection inequality, log-concave functions, difference body, polar projection body}

\begin{abstract}
The Rogers-Shephard and Zhang's projection inequalities are two reverse, affine isoperimetric-type inequalities for convex bodies. Following a classical work by Schneider, both inequalities have been extended to the so-called $m$th-order setting. In this work, we establish the $m$th-order analogues for these inequalities in the setting of log-concave functions. Our proof of the functional Zhang's projection inequality employs properties of the asymmetric LYZ body, significantly streamlining the argument and producing a novel approach for the case $m=1$. Furthermore, we introduce and analyze the radial mean bodies of a log-concave function, thereby providing a functional generalization of Gardner and Zhang's radial mean bodies. These are new even in the case $m=1$. Our development leverages an extension of Ball bodies, which may be of independent interest.

\end{abstract}

\maketitle

\section{Introduction}

In the Brunn-Minkowski theory of convex bodies (i.e., compact, convex subsets of $\R^n$ with non-empty interior, for $n \in \N$ fixed), two main affinely invariant inequalities concern the difference body and the polar projection body of a convex body $K$ in $\R^n$. The starting points of this work are the two corresponding reverse affine isoperimetric inequalities: the Rogers-Shephard and Zhang's projection inequalities, which we will introduce below; see Yang's survey \cite{Y10} for an introduction to the theory of affine structures in convex geometry. Recall that the Minkowski sum of two sets $A, B  \subseteq \R^n$ is the set $$A+B=\{a+b:a\in A, b\in B\}.$$ 
We say that $K$ is symmetric about the origin if $K=-K$ and that $K$ is symmetric if a translate of $K$ is symmetric about the origin. Every origin-symmetric convex body is the unit ball of a norm. More generally, given a convex body $K$ containing the origin, it is the unit ball of its Minkowski functional, or gauge: for $x\in \R^n$, $\|x\|_K=\inf\{t>0: x\in t K\}.$ Additionally, the support function of a convex body $K$ is precisely $h_K(x)=\sup_{y\in K}\langle y,x\rangle.$

\subsection{Affine isoperimetric-type inequalities and their higher-order extensions.}
For a convex body $K$, its difference body is defined as $$DK\coloneqq K+(-K).$$ The associated isoperimetric-type inequalities relating $DK$ and $K$ are
\begin{equation}
    \label{eq:RS}
    2^n \leq \frac{\vol_n(DK)}{\vol_n(K)}\leq \binom{2n}{n},
\end{equation}
where $\vol_n$ is the Lebesgue measure on $\R^n$. The left-hand side of \eqref{eq:RS}, which follows from the Brunn-Minkowski inequality (see \eqref{e:BM} later), becomes an equality if and only if $K$ is symmetric. The right-hand side of \eqref{eq:RS} is known as the Rogers-Shephard inequality. Here, equality holds if and only if $K$ is an $n$-dimensional simplex \cite{RS57}. We recall that the convex hull of a set $A \subset \R^n$, denoted $\conv A$, is the smallest convex set containing $A$; an $n$-dimensional simplex is the convex hull of $(n+1)$ affinely independent points.

Denoting $\vol_{n-1}(P_{x^\perp} K)$ as the volume of the orthogonal projection of $K$ onto the linear subspace orthogonal to $x\in\R^n\setminus\{o\}$, the polar projection body $\pp K$ of a convex body $K$ is the unit ball of the norm
\begin{equation}\|x\|_{\pp K} \coloneqq \frac{1}{2}\int_{\partial K}|\langle x,n_K(y) \rangle|\,\dlat \mathcal{H}^{n-1}(y) = |x|\vol_{n-1}(P_{x^\perp} K),
\label{eq:pp}
\end{equation}
where $\mathcal{H}^{n-1}$ is the $(n-1)$-dimensional Hausdorff measure on $\partial K$, the boundary of $K$, and $n_K:\partial K \longrightarrow \s$ is the generalized Gauss map. The latter is a well-defined function $\mathcal{H}^{n-1}$-almost everywhere. On the subset of $\partial K$ in which it is well-defined, it associates each point with the corresponding unique outer-unit normal vector.

The following inequalities then hold, denoting the unit Euclidean ball in $\R^n$ as $\B$: \begin{equation}
\label{eq:zhang}
\frac{1}{n^n}\binom{2n}{n}\leq \frac{\vol_n(\pp K)}{\vol_n(K)^{1-n}}\leq \left(\frac{\vol_n(B_2^n)}{\vol_{n-1}(B_2^{n-1})}\right)^n.\end{equation}
The left-hand side is Zhang's projection inequality, where equality holds if and only if $K$ is an $n$-dimensional simplex \cite{Zhang91}. The right-hand side is Petty's projection inequality, with equality if and only if $K$ is an ellipsoid \cite{CMP71}.

An essential link between the difference body $DK$ and polar projection body $\Pi^\circ K$ is provided by the covariogram. For a convex body $K$, the covariogram is defined as
\begin{equation}\label{eq:covariogram}
g_K(x)=\vol_n(K\cap (K+x));
\end{equation} 
see the recent survey by Bianchi \cite{GB23} for a rich overview of this function. We merely mention that, $g_K$ is $(1/n)$-concave via an application of the Brunn-Minkowski inequality \eqref{e:BM}, that the support of $g_K$ is $DK$, and that \begin{equation}\label{e:matheron}
    \frac{\partial}{\partial r}g_K(r\theta)\big|_{r=0^+}=-\|\theta\|_{\pp K}, \quad \theta\in\s,
\end{equation}
as shown by Matheron \cite{MA}.  Note also that $\max_{x \in \R^n} g_K(x)=g_K(o)=\vol_n(K)$. Moreover, observing that $$\int_{\R^n}g_K(x)\,\dlat x = \vol_n(K)^2,$$
one may combine the Rogers-Shephard and Zhang's projection inequalities into
\begin{equation}\vol_n(DK)\vol_n(K)\!\leq\! \binom{2n}{n}\int_{\R^n}g_K(x)\,\dlat x\leq n^n\vol_n(K)^{n+1}\vol_n(\pp K).
\label{eq:Zhang_other_form}
\end{equation}

We now introduce the first framework for the extensions we consider. We write $\bar x=(x_1,\dots,x_\hid)$, $x_i\in\R^n,$ for vectors in $(\R^n)^\hid \simeq \R^{n\hid}$. For a convex body in $\R^n$, Schneider \cite{Sch70} introduced the difference body of order $m\in\N$,
\begin{equation}
\label{eq:mDiff}
D^\hid(K) \coloneqq \left\{\overline{x}=(x_1,\dots,x_{\hid}) \in \R^{n\hid} \colon K \bigcap_{i=1}^\hid (x_i+K) \neq \emptyset \right\} \subset \R^{n\hid},
\end{equation}
and then established the following $m$th-order Rogers-Shephard inequality
\begin{equation}
    \frac{\vol_{n\hid}\left(D^\hid (K)\right)}{\vol_n(K)^{\hid}}\leq \binom{n(\hid +1)}{n}.
    \label{eq:RSell}
\end{equation}
Equality in \eqref{eq:RSell} holds if and only if $K$ is an $n$-dimensional simplex.
Fixing $m\in\N$, Schneider's primary tool \cite{Sch70} is an extension of the covariogram function of a convex body $K$, namely
\begin{equation}
    \Cov(\bar x)=\vol_n\left(K \bigcap_{i=1}^\hid (x_i+K)\right),
    \label{eq:covario_hi}
\end{equation}
which is supported on $D^{\hid}(K)$. It follows from the Brunn-Minkowski inequality that $\Cov(\bar x)^\frac{1}{n}$ is concave for all $\hid\in\N$ (see e.g. \cite[Lemma 5.1]{Sch70} for a proof).

Concerning a lower bound to \eqref{eq:RSell}, Schneider showed for $n=2$ and for any $\hid\in\mathbb{N}$ that the minimum is obtained for every symmetric convex body. For $n\geq 3$ and $\hid\geq 2,$ this is instead false. In the same work, Schneider conjectured that \eqref{eq:RSell} is minimized by ellipsoids for such $n$ and $\hid$.

The interest in the construction \eqref{eq:mDiff} has been recently reignited by the work of Roysdon \cite{Ro20}, where \eqref{eq:RSell} was generalized to product measures with suitable concavity properties. This setting was further investigated by the first author and collaborators in \cite{HLPRY25}, where \eqref{eq:mDiff} served as a starting point for a wider $m$th-order setting. 

In \cite{HLPRY25}, the following notion was introduced. For a convex body $K$ in $\R^n$, its $m$th-order polar projection body $\PP K\subset \R^{nm}$ is given by the gauge
\begin{equation}
\label{eq:PP}
\|\bar\theta\|_{\PP K} = \!\int_{\partial K}h_{C_{-\bar \theta}}(n_K(y))\! \,\dlat \mathcal{H}^{n-1}(y), \quad \bar\theta=(\theta_1,\dots,\theta_\hid)\in \S,
\end{equation}
where $C_{\bar x}\coloneqq\conv\{o,x_1,\dots,x_\hid\}\subset \R^n$ is a polytope whose vertices are a subset of $\{x_i\}_{i=1}^m$. The exact formula for its support function is
\[
h_{C_{-\bar \theta}}(u) = \max_{1\leq i \leq \hid}\langle u,\theta_i \rangle_{-},\quad \text{ where, for }a\in\R,\;  a_-=\max\{0,-a\}.
\]
This definition was motivated by the fact that, for every $\bar\theta\in \mathbb{S}^{nm-1},$ 
\begin{equation}
    \frac{\partial}{\partial r}\Cov(r\bar\theta)\Big|_{r=0^+}=-\|\bar\theta\|_{\PP K},
    \label{eq:cov_deriv}
\end{equation}
as established in \cite{HLPRY25}. Note that, for every convex body $K$ and $\theta\in\s$,
\begin{equation}
\label{eq:reduces}
\int_{\partial K} \langle \theta,n_K(y) \rangle_{-} \dlat \mathcal{H}^{n-1}(y) = \frac{1}{2}\int_{\partial K} |\langle \theta,n_K(y) \rangle| \dlat \mathcal{H}^{n-1}(y),\end{equation}
establishing that $\Pi^{\circ,1} K =\Pi^\circ K$. The following inequalities hold: for $m,n\in\N$ and $K\subset \R^n$ a convex body, 
\begin{equation}
\label{eq:zhang_higher}
\frac{1}{n^{n\hid}}\binom{n(\hid+1)}{n}\leq \frac{\vol_{n\hid}(\PP K)}{\vol_n(K)^{\hid(1-n)} }\leq \frac{\vol_{n\hid}(\PP \B)}{\vol_{n}(\B)^{m(1-n)}}.\end{equation}
The first inequality in \eqref{eq:zhang_higher} is the $m$th-order Zhang's projection inequality, with equality if and only if $K$ is a $n$-dimensional simplex (see \cite[Corollary 1.3]{HLPRY25}), and the second inequality is the $m$th-order Petty's projection inequality with equality if and only if $K$ is an ellipsoid (see \cite[Theorem 1.4]{HLPRY25}).

\subsection{The theory of log-concave functions.} Our next framework is the setting of $\log$-concave functions. In what follows, a measurable function is a Borel measurable function, if not differently specified. For a measurable function $g:\R^n\longrightarrow \R_+$, where $\R_+=[0,\infty)$, we denote as usual $$\|g\|_{p} = \left(\int_{\R^n}g^p\,\dlat x\right)^\frac{1}{p} \quad \text{and} \quad \|g\|_\infty = {\operatorname{ess} \sup} \;g.$$
The support of $g$ is the closed set $\supp(g) = \overline{\{x\in\R^n:g(x)>0\}}$.

A function $f:\R^n\longrightarrow \R_+$ is $\log$-concave if it is non-identically zero and for every $\lambda \in [0,1]$ and $x,y\in \R^n$, one has 
\begin{equation}f((1-\lambda)x + \lambda y) \geq f(x)^{1-\lambda}f(y)^\lambda.
\label{eq:log}
\end{equation}
In this work, we will always consider the following class of log-concave functions
\begin{align*}
\lc:&=\left\{f:\R^n\longrightarrow \R_+:f \not\equiv 0,\text{ is log-concave,} \right.
\\
&\quad\quad\quad\quad\left. \text{upper semi-continuous, and integrable}.\right\}
\end{align*}

The works of Ball \cite{Ball88}, Artstein-Avidan, Klartag, and Milman \cite{AAKM04}, and Klartag and Milman \cite{KM05} initiated the geometrification of $\log$-concave functions, where results and behaviours similar to those of convex bodies are shown to hold for $f\in \lc$. Several fundamental inequalities from convex geometry have received extensions to this setting, such as Santal\'o's inequality (and its reverse inequality) \cite{AAKM04,FM07,FM08,FM08_2,JL08,JL09,FGMR10,BBF14,FGZ23}, Gr\"unbaum's inequality \cite{MF24,MNRY18,MSZ18}, Petty's projection inequality \cite{GZ99,TW12,YL21,SAL19,HLPRY25,HLPRY23_2}, Zhang's projection inequality \cite{ABG20, AGJV18,LRZ22} and the Rogers-Shephard inequality \cite{Co06,AlGMJV,AHNRZ, AAEFO15,AAGJV}.

We write $\mathrm{BV}(\R^n)$ to denote the space of functions of bounded variation on $\R^n$. We mention here that given a function in $\mathrm{BV}(\R^n)$, there exists a vector-valued measure on $\R^n$ associated to it, its so-called \textit{variation measure}. One has that $\lc \subset \mathrm{BV}(\R^n)$. Recently, Rotem \cite[Proposition 2.5]{LR23} established a formula for the variation measure of $f\in \lc$; for a self-contained study of log-concave functions, we choose to use this explicit representation in our definitions. We recall that, for $f\in \lc$, $\nabla f$ exists almost everywhere on $\R^n$. Moreover, $\supp(f)$ is convex and, therefore, $\partial \supp(f)$ admits $\mathcal{H}^{n-1}$-almost everywhere an outer-unit normal. A fundamental instrument in our investigation is the LYZ body, originally introduced by Lutwak, Yang, and Zhang in \cite{LYZ06}. We present it, for now, in a self contained form, postponing to the next section a more detailed discussion on the topic.
\begin{definition}
\label{def:LYZ_body}
    Let $f\in \lc$. Then, its asymmetric LYZ body $\langle f \rangle\subset\R^n$ is the unique convex body with center of mass at the origin with the following property: for every $1$-homogeneous function $h$ on $\Rn\setminus\{o\}$,
\begin{equation}\int_{\partial \langle f \rangle}\!\!h(n_{\langle f \rangle}(y))\,\dlat \mathcal{H}^{n-1}(y)\!=\!\int_{\Rn}\!\!\!h(-\nabla f)\,\dlat x + \!\!\int_{\partial\supp(f)}\!\!\! \!\!\!\!\!\!\!\!h(n_{\supp(f)} (y)) f(y)\dlat\mathcal{H}^{n-1}(y).
\label{eq:LYZ}
\end{equation}
\end{definition}\noindent
We address the existence and uniqueness of this case of the asymmetric LYZ body in Section~\ref{sec:fun}. 

The Zhang's projection inequality from the right-hand side of \eqref{eq:Zhang_other_form} was extended to $ \lc$ by Alonso-Guti\'errez, Bernu\'es, and Gonz\'alez Merino \cite{ABG20}, where they had to work with the projection operator $\pp$ applied to the level sets of $f$. The notation $\Pi^\ast f$ is commonly used in this context, see, e.g., \cite{ABG20,AlGMJV,SAL19,AGJV18,HL25}.

For our purposes, it is possible, and indeed crucial, to state the functional polar projection body in terms of the asymmetric LYZ body from Definition \ref{def:LYZ_body}. The idea of applying the operator $\pp$ to the LYZ body appeared implicitly in a few works by G. Zhang \cite{GZ99} (for $C^1$ smooth functions), Lutwak, Yang and Zhang \cite{LYZ02,LYZ06,CLYZ09} (for $W^{1,1}(\R^n)$), and later explicitly in the works of T. Wang \cite{TW12,TW13} (for $\mathrm{BV}(\R^n)$) and \cite{CLM17_2} (for $\lc$).

 Hence, within our work, the polar projection body of a function $f\in\lc$ is the result of the operator $\Pi^\circ$ applied to the asymmetric LYZ body $\langle f \rangle$, i.e.,  $\pp \langle f \rangle\subset \R^n$. By Definition~\ref{def:LYZ_body}, with $h=\frac{1}{2}|\langle \theta, \cdot\rangle|$, we see that $\Pi^\circ \langle f \rangle$ is the unit ball of the norm 
\begin{equation}
    \|x\|_{\pp \langle f \rangle} \!\!= \!\frac{1}{2}\!\left(\!\int_{\Rn}\!\!\!|\langle \nabla f(x),\theta\rangle|\,\dlat x \!+\! \!\!\int_{\partial\supp(f)} \!\!\!\!\!\!\!\!\!\!\!\!\!\!\!\!|\langle n_{\supp(f)} (y), \theta \rangle| f(y)\dlat\mathcal{H}^{n-1}(y)\!\!\right).
    \label{eq:ppLYZ}
\end{equation}
 Comparing our definition to those aforementioned, the cited works consider either $f$ in the Sobolev space $W^{1,1}(\R^n)$, and so without the integral over $\partial \supp(f)$ found in \eqref{eq:ppLYZ}, or in $\mathrm{BV}(\R^n)$ with the abstract variation measure in place of our specific formulas from the more recent work \cite{LR23}. Under our normalization, we have $$\pp \langle e^{-\|x\|_K} \rangle = \frac{1}{(n-1)!} \pp K.$$

Written in terms of $\pp \langle f\rangle$, the main result \cite[Theorem 2.1.1]{ABG20} by Alonso-Guti\'errez, Bernu\'es, and Gonaz\'alez Merino is as follows: for $f\in \lc$, they introduced the functional covariogram
\begin{equation}
    \label{eq:fun_covario}
    g_f(x)=\int_{\R^n}\min\left\{f(y),f(y-x)\right\}\,\dlat y
\end{equation}
and proved
\begin{equation}\frac{1}{n!}\int_{\R^n}g_f(x)\,\dlat x\leq \|f\|_1^{n+1}\vol_n(\pp \langle f \rangle).
\label{eq:Zhang_fun}
\end{equation}
Among the functions with $\|f\|_\infty = f(o)$, there is equality if and only if $f(x)=\|f\|_\infty e^{-\|x\|_{\Delta_n}}$ for any $n$-dimensional simplex $\Delta_n$ containing the origin. We note that the constants in the original work are slightly different, again due to our definition of $\pp \langle f \rangle$. 

In our first main result, we prove the $m$th-order generalization of Zhang's projection inequality (i.e., the first inequality in \eqref{eq:zhang_higher}) for integrable $\log$-concave functions. Our primary object of interest is obtained by applying the operator $\PP$ to $\langle f \rangle$, creating the $m$th-order polar projection LYZ body of $f\in \lc$, $\PP \langle f\rangle\subset\R^{nm}$. Like in the $m=1$ case, we choose to utilize Definition~\ref{def:LYZ_body} in conjunction with \eqref{eq:PP}.
\begin{definition}
\label{def:higer_LYZ_body}
    Fix $m,n\in\N$. Let $f\in \lc$. Then, its $m$th-order polar projection LYZ body $\PP \langle f\rangle\subset\R^{nm}$ is the convex body containing the origin given by the gauge    
\begin{equation}
\label{eq:hi_LYZ_BV}
\begin{split}
\|\bar\theta\|_{\LYZP{f}} = \int_{\R^n} &h_{C_{-\bar \theta}}\left(-\nabla f(x)\right)\,\dlat x 
\\
&+ \int_{\partial\supp(f)} h_{C_{-\bar \theta}}(n_{\supp(f)} (y))f(y)\dlat\mathcal{H}^{n-1}(y).
\end{split}
\end{equation}
\end{definition}\noindent
The Definition~\ref{def:higer_LYZ_body} reduces to the formula \eqref{eq:ppLYZ} when $m=1$, thanks to \eqref{eq:reduces}. We establish the following result, extending the $m=1$ case from \cite{ABG20}. We henceforth set $x_0=o$. For $\bar x = (x_1,\dots,x_m)\in\R^{n\hid}$, we set the $m$th-order covariogram of a function to be 
\begin{equation}
    \label{eq:fun_covario_hi}
    g_{f,\hid}(\bar x)=\int_{\R^n}\min_{0\leq i \leq m}\{f(y-x_i)\}\,\dlat y.
\end{equation}
Observe also that $g_{f,\hid}(\bar o)=\|f\|_1$.
\begin{theorem}[The $m$th-order Zhang's projection inequality for log-concave functions]
 Fix $n,\hid\in\N.$ Let $f\in \lc$. Then, 
    \begin{equation}\frac{1}{(n\hid)!}\int_{\R^{n\hid}}g_{f,\hid}(\bar x)\,\dlat\bar x\leq \|f\|_1^{n\hid+1}\vol_{n\hid}(\PP \langle f \rangle).
    \label{eq:m_fun_Zhang}
\end{equation}
where equality holds if and only if $f(x)=\|f\|_\infty e^{-\|x-x^\prime\|_{\Delta_n}}$ for some $x^\prime\in\R^n$ and an $n$-dimensional simplex $\Delta_n$ containing the origin.
\label{t:Zhang_fun_hi}
\end{theorem}
Using the classical formula 
\begin{equation}\vol_n(K)=\frac{1}{n!}\int_{\R^n}e^{-\|x\|_K}\,\dlat x
\label{eq:classical}
\end{equation} and the fact that the maximum of gauges corresponds to the gauge of the Cartesian product, Theorem~\ref{t:Zhang_fun_hi} indeed reduces to the left-hand inequality in \eqref{eq:zhang} when $f=e^{-\|\cdot\|_K}$.

We believe that our proof of the inequality \eqref{eq:m_fun_Zhang} (even when $m=1$) provides a substantial simplification of the strategy adopted in \cite{ABG20} due to the usage of the asymmetric LYZ body. In Section~\ref{sec:fun_covario}, we provide a direct, concise proof of Theorem~\ref{t:Zhang_fun_hi}. Additionally, in Section~\ref{sec:radial_function_bodies}, we will realize Theorem~\ref{t:Zhang_fun_hi} as a special case of a more general theorem.

\subsection{Radial mean bodies.} We first introduce a larger geometric setting. In \cite{GZ98}, Gardner and Zhang introduced the radial $p$th mean bodies $R_p K \subset \R^n$ of a convex body $K\subset \R^n$, where $p>-1$. Given a \textit{star body} $L\subset\R^n$ (see Definition~\ref{def:star}), it is uniquely identified with its \textit{radial function} \begin{equation}
    \label{eq:radial_function}
    \rho_L(x)\coloneqq\sup\{t>0:tx\in L\}, \quad x\in \R^n\setminus\{o\}.\end{equation} We note that a convex body $K$ is always a star body if translated by a point in its interior. The radial $p$th mean bodies $R_pK$ of a convex body $K\subset \R^n$ from \cite{GZ98} are star bodies given by the radial functions
    \begin{equation}
    \rho_{R_p K}(\theta) = \begin{cases}\left(\frac{1}{\vol_n(K)}\int_K\rho_{K-x}(\theta)^p\,\dlat x\right)^\frac{1}{p}, & p>-1,p\neq 0,
    \\
    \exp\left(\frac{1}{\vol_n(K)}\int_K\log\left(\rho_{K-x}(\theta)\right)\,\dlat x\right),  & p=0,
    \\
    \max_{x\in K}\rho_{K-x}(\theta)=\rho_{DK}(\theta), & p=\infty.
    \end{cases}
    \label{eq:radial_mean_og}
\end{equation}
An immediate consequence of the definition is the following (strict) chain of set inclusions, which follows by a direct application of Jensen's inequality to \eqref{eq:radial_mean_og}: for $-1 < p < q < \infty,$
\begin{equation}
\label{eq:chain_og}
    \{o\} = R_{-1}K \subset R_p K \subset R_q K \subset R_{\infty }K = DK.
\end{equation}
More importantly, a consequence of \cite[Theorem 2.2]{GZ98}, is the following limit
\begin{equation}
    \lim_{p\to (-1)^+}(1+p)^\frac{1}{p}R_p K = \vol_n(K)\Pi^\circ K.
\end{equation}
Additionally, \cite[Theorem 5.5]{GZ98} establishes a formal reversal of \eqref{eq:chain_og}: for $-1 < p < q < \infty$, one has the chain of inequalities
\begin{equation}
    \label{e:set_inclusion}
    R_\infty K = D K \subseteq \binom{n+q}{q}^\frac{1}{q} R_{q} K \subseteq \binom{n+p}{p}^\frac{1}{p} R_{p} K \subseteq n \vol_n(K) \Pi^{\circ} K.
\end{equation}
 Equality in \eqref{e:set_inclusion} is achieved in each inclusion if and only if $K$ is an $n$-dimensional simplex. The additional observation that $$\vol_n(R_n K)=\vol_n(K)$$ shows that \eqref{e:set_inclusion} interpolates between \eqref{eq:RS} and \eqref{eq:zhang}. 

Through a suitable change of variable, one infers the following equivalent form of \eqref{eq:radial_mean_og} for the radial function of $R_p K$. The identity is well-known (see \cite[Lemma 3.1]{GZ98} and also \cite{LP25} for $p\in (-1,0)$), but we will provide a proof for completeness. Recall that $g_K$ is the covariogram of $K$ from \eqref{eq:covariogram}.
\begin{proposition}
\label{p:equivalence}
Let $p>-1$ and $K\subset \R^n$ a convex body. Then, the radial function of $R_p K$ satisfies, for $\theta\in\s$:
    \begin{equation}
\rho_{R_p K}(\theta)=\begin{cases}
    \left(p\int_0^{\infty}\left(\frac{g_K(r\theta)}{\vol_n(K)}\right)r^{p-1}\dlat \, r\right)^\frac{1}{p}, & p>0,
    \\
    \exp\left(\int_0^{\infty}\frac{\partial}{\partial r}\left(\frac{-g_K(r\theta)}{\vol_n(K)}\right)\log(r)\dlat \, r\right), & p=0,
    \\
    \left(p\int_{0}^{\infty} \left(\frac{g_K(r\theta)}{\vol_n(K)}-1\right) r^{p-1} d r\right)^\frac{1}{p}, & p\in (-1,0).
\end{cases}
\label{radial_ell}
\end{equation}
In particular, the sets $R_p K$ are origin-symmetric star bodies for all $p >-1$ and convex for $p\in [0,\infty)$.
\end{proposition}
For $p\in (-1,0)$, the convexity of $R_p K$ is an open problem; it was recently established in the special case $n=2$ \cite{JH25}. The formula for $p>0$ in \eqref{radial_ell} shows that $R_p K$ are the so-called $p$th \textit{Ball bodies} of $g_K$, introduced by K. Ball in the influential work \cite{Ball88}. Motivated in part by the formula \eqref{radial_ell}, we introduce Ball bodies for the choice $p\in (-1,0]$, which, to the extent of our knowledge, appears to be missing from the literature. For a precise definition of Ball bodies, see Section~\ref{sec:ball_bodies}. The idea of this construction is to consider a function $g:\R^n\longrightarrow \R_+$ with suitable integrability assumptions and produce a star body $K_p (g)\subset \R^n$, $p>-1$. In this spirit, the identity \eqref{radial_ell} can be expressed as
\[R_p K = K_p\left(g_K\right).\]
The $m$th-order extensions $R^\hid_p K\subset \R^{nm}$ of $R_p K$ were defined in \cite{HLPRY25} for $m\geq 1$; under the present notation, they are
 \[
 R_p^mK=K_p(g_{K,m}),
 \]
 and they satisfy similar inclusions to those of \eqref{eq:chain_og} and \eqref{e:set_inclusion}. 
 
A further subject of this work is a functional generalization of the radial mean bodies $R^\hid_p K$ and the corresponding version of \eqref{e:set_inclusion} for log-concave functions. Even in the case $m=1$, these bodies, and the corresponding results, appear to be new. We denote these bodies by $R^\hid_p f$. See Section~\ref{sec:radial_function_bodies} for the precise definition, which can be synthetically expressed as
 \[
R_p^m f =K_p(g_{f,m}) \subset \R^{nm}.
\]
In analogy with radial pth mean bodies of convex bodies, one has for $p\geq 0$ that $R^\hid_p f$ is a convex body due to results from \cite{Ball88,GZ98} (see Proposition~\ref{p:radial_ball}). If $\hid=1$, then $R_p^m f$ is always origin symmetric since $g_{f}$ is always an even function. If $\hid \geq 2$, $R^\hid_p f$ may not be origin-symmetric, as is the case when $f=\chi_K$ for a non-symmetric $K$ (see \cite[Proposition 3.5]{HLPRY25}). For $p\in (-1,0)$, $R^\hid_p f$, the question of convexity remains open.

For our purposes, we also study the limiting bodies as $p\to -1$ and $p\to \infty$, which are deduced from the analysis of the asymptotic behaviour of the more general Keith Ball bodies in Section~\ref{sec:ball_bodies}. As a result, we will show that \[ \lim_{p\to (-1)^+}(1+p)^\frac{1}{p}R^\hid_p f=\|f\|_1\PP \langle f \rangle. \]

Concerning the asymptotics for $p\to \infty$, we will see that $R_{\infty}^\hid f$ may be a compact, convex set, but it may also be $\R^{nm}$. Additionally, the limit $\lim_{p\to \infty}\Gamma\left(1+p\right)^{-\frac{1}{p}}R^\hid_p f$ will always be a compact, convex set, but it may be a singleton containing the origin, as the following statement shows. Recall the digamma function $\psi(t)=\Gamma^\prime(t)/\Gamma(t)$.
 \begin{corollary}
\label{cor:zero}
    Fix $m,n\in\N$. Let $K\subset \R^n$ be a convex body and set $\gamma_K(x)=e^{-\frac{\|x\|_K^2}{2}}$. Then, $R_0^m\gamma_K = \sqrt{2}\exp\left(\frac{1}{2}\psi\left(1+\frac{n}{2}\right)\right)R_0^m K$ and, for $p>0$,
\[
R^\hid_p \gamma_K = \sqrt{2}\left(\frac{\Gamma\left(1+\frac{n+p}{2}\right)}{\Gamma\left(1+\frac{n}{2}\right)}\right)^\frac{1}{p}R_p^{\hid}K.
\]
In particular,
\[
    \lim_{p\to \infty}\Gamma(1+p)^{-\frac{1}{p}}R^\hid_p \gamma_K =\{o\}.
    \]
\end{corollary}

Due to the above considerations, we choose not to treat the case $p=\infty$ in the following theorem, as it may be a singleton containing the origin.
\begin{theorem}
\label{t:radial_hi_fun_set}
    Fix $n,m\in\N$. Let $f\in\lc$. Then, for $-1<p<q<\infty$,
    $$ \left(\frac{1}{\Gamma(q+1)}\right)^\frac{1}{q} R_{q}^\hid f \subseteq \left(\frac{1}{\Gamma(p+1)}\right)^\frac{1}{p} R_{p}^\hid f \subseteq \|f\|_1\PP \langle f \rangle,$$
    with equality if and only if 
    $$g_{f,\hid}(\bar x) = \|f\|_1e^{-\frac{\|\bar x\|_{\PP \langle f \rangle}}{\|f\|_1}}.$$
    Equivalently, there is equality if and only if  $f(x)=\|f\|_\infty e^{-\|x-x^\prime\|_{\Delta_n}}$ for an $n$-dimensional simplex $\Delta_n$ containing the origin and some $x^\prime \in \R^n$.
\end{theorem}

\subsection{Functional, higher-order Rogers-Shephard inequality} In the classical setting, as mentioned, $DK = \text{supp}(g_K)$, and the difference body appears in the set inclusions \eqref{eq:chain_og} and \eqref{e:set_inclusion}. We have a similar characterization.
\begin{proposition}
\label{p:support}
    Fix $n,m\in\N$. Let $f\in \lc$. Then, $\supp(g_{f,\hid})\subseteq \R^{nm}$ contains the origin in its interior and
        \[
        R_\infty^\hid f = \lim_{p\to\infty} R_p^\hid f = \supp(g_{f,\hid}) = D^m(\supp(f)).
        \]
If $\supp(f)=\R^n$, the above should be understood as $\R^{nm}$.
\end{proposition}\noindent
An inequality involving $\supp(g_{f,m})$ seems therefore an unnatural substitute for the Rogers-Shephard inequality, especially in the case when $\supp(f)=\R^n$; indeed, an inequality involving $D^m(\supp(f))$ would erase too much of the information defining $f$. In these terms, in stark contrast with the geometrical case, a functional Rogers-Shepard-type inequality cannot be deduced from Theorem \ref{t:radial_hi_fun_set}. Therefore, in order to complete the picture and establish such inequalities, alternative choices of ``covariograms" are required.

The first step is to introduce an appropriate generalization of Minkowski addition. For two log-concave functions $f,g:\R^n \longrightarrow \R_+$, their sup-convolution is defined as $$f\star g(z)=\sup_{x+y=z}f(x)g(y).$$
Setting $\overline f(x)=f(-x)$, Alonso-Guti\'errez, Gonz\'alez Merino, Jim\'enez, and Villa \cite[Theorem 2.2]{AlGMJV} showed, elaborating on a previous inequality by Colesanti \cite{Co06}, that
$$\int_{\R^n}f\star \bar f(z) \,\dlat z\leq \binom{2n}{n}\|f\|_\infty\|f\|_1,$$
with equality if and only if $f/\|f\|_\infty$ is the characteristic function of an $n$-dimensional simplex.
In fact, they also established (see \cite[Theorem 2.1]{AlGMJV}) a generalization when $\overline f$ is replaced by any log-concave function $g$.

Our final result is the following $m$th-order version of the Rogers-Shephard inequality \eqref{eq:RSell}, which extends the $\hid=1$ case established by Alonso-Guti\'errez, Gonz\'alez Merino, Jim\'enez, and Villa \cite{AlGMJV}. Given $\hid+1$ functions $f_0,\!\dots\!,f_\hid$ on $\R^{n}$, define $\bar f((x_0,\bar x)) \in \R^{n(\hid+1)}$ as \[\bar f((x_0,\bar x))\! =\! (f_0(x_0),f_1(x_1),\!\dots\!,f_\hid(x_\hid)).\] We next consider the convolutions
$$
(\bar f)_{\oplus_{\hid}} (\bar x) \coloneqq \int_{\R^n} f_0(z)f_1(z - x_1) \cdot \cdot \cdot f_\hid(z - x_\hid) \dlat z
$$
and
$$
(\bar f)_{\star_{\hid}} (\bar{x}) \coloneqq \sup_{z \in \R^n} f_0(z)f_1(z - x_1) \cdot \cdot \cdot f_\hid(z - x_\hid).
$$
With these definitions at hand, we follow the ideas from \cite{AlGMJV} and prove the following Rogers-Shephard-type theorems. The first applies to the integral convolution of $\hid+1$ log-concave functions.
\begin{theorem}
\label{t:hi_fun_RS}
    Fix $n,\hid\in\N$. Let $f_0,f_1,...,f_m \in \lc$. Then, setting $\bar f = (f_0,f_1,\dots,f_\hid):$
    \begin{align*}
        \|(\bar f)_{\oplus_{\hid}}\|_\infty\| (\bar f)_{\star_{\hid}}\|_{1}
        \leq \binom{n(\hid + 1)}{n} \left(\prod_{i=0}^\hid\|f_i\|_\infty\|f_i\|_1\right).
    \end{align*}
\end{theorem}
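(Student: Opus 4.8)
The plan is to isolate a Rogers--Shephard inequality comparing the marginal and the fiberwise supremum of a \emph{single} log-concave function, and then to apply it to an appropriate function on $\R^{n(\hid+1)}$. Set $G(y_0,\dots,y_\hid)=g_0(y_0)\cdots g_\hid(y_\hid)$; this is log-concave on $\R^{n(\hid+1)}$ with $\|G\|_{L^\infty}=\prod_i\|g_i\|_{L^\infty(\R^n)}$ and $\|G\|_{L^1}=\prod_i\|g_i\|_{L^1(\R^n)}$. Composing with the unimodular linear map $(z,\bar x)\mapsto(z,z-x_1,\dots,z-x_\hid)$ produces the log-concave function
\[\widetilde G(z,\bar x)=g_0(z)g_1(z-x_1)\cdots g_\hid(z-x_\hid)\qquad\text{on }\R^{n}_z\times\R^{n\hid}_{\bar x},\]
with the same $L^\infty$ and $L^1$ norms as $G$; moreover, directly from the definitions of the two convolutions, the marginal of $\widetilde G$ onto $\R^{n\hid}_{\bar x}$ is $(\bar g)_{\oplus_\hid}$ and its fiberwise supremum over $\R^n_z$ is $(\bar g)_{\star_\hid}$. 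Hence Theorem~\ref{t:hi_fun_RS} is exactly the case $d=n\hid$ of the following: \emph{if $F\colon\R^n\times\R^d\to\R_+$ is integrable and log-concave, $\phi(\xi)=\int_{\R^n}F(z,\xi)\,dz$, and $\psi(\xi)=\sup_z F(z,\xi)$, then $\|\phi\|_{L^\infty(\R^d)}\,\|\psi\|_{L^1(\R^d)}\le\binom{n+d}{n}\|F\|_{L^\infty}\|F\|_{L^1}$.} Its indicator-function case is a classical ``slice'' Rogers--Shephard inequality, and taking $g_0=\dots=g_\hid=\mathbf 1_K$ recovers \eqref{eq:RSell}.

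To prove the displayed statement I would first record the geometric input: for a convex body $C\subseteq\R^n\times\R^d$, Brunn--Minkowski makes $\xi\mapsto\vol_n\big(C\cap(\R^n\times\{\xi\})\big)^{1/n}$ concave on $P_{\R^d}C$, so the slice volume equals $M(C)\rho(\xi)^n$ for a concave $\rho$ with $\max\rho=1$; since $(1-\tau)\xi_0+\tau\,P_{\R^d}C\subseteq\{\rho\ge1-\tau\}$ whenever $\rho(\xi_0)=1$, a one-dimensional (Beta-function) computation gives $\int_{P_{\R^d}C}\rho^n\ge\binom{n+d}{n}^{-1}\vol_d(P_{\R^d}C)$, and integrating the slices yields
\[\vol_{n+d}(C)\ \ge\ \binom{n+d}{n}^{-1}M(C)\,\vol_d\big(P_{\R^d}C\big),\qquad M(C):=\max_\xi\vol_n\big(C\cap(\R^n\times\{\xi\})\big).\]
Now put $T=\|F\|_{L^\infty}$ and $C_t=\{F\ge t\}$. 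The layer-cake formula gives $\|F\|_{L^1}=\int_0^T\vol_{n+d}(C_t)\,dt$ and $\|\psi\|_{L^1}=\int_0^T\vol_d(P_{\R^d}C_t)\,dt$, while for \emph{every} $\xi$ one has $\phi(\xi)=\int_0^T\vol_n\big(C_t\cap(\R^n\times\{\xi\})\big)\,dt\le\int_0^T M(C_t)\,dt$, so $\|\phi\|_{L^\infty}\le\int_0^T M(C_t)\,dt$. Applying the geometric inequality to each $C_t$ and then Chebyshev's integral inequality (both $t\mapsto M(C_t)$ and $t\mapsto\vol_d(P_{\R^d}C_t)$ are non-increasing on $[0,T]$) yields
\[\binom{n+d}{n}\|F\|_{L^1}\ \ge\ \int_0^T M(C_t)\,\vol_d(P_{\R^d}C_t)\,dt\ \ge\ \frac1T\Big(\int_0^T M(C_t)\,dt\Big)\Big(\int_0^T\vol_d(P_{\R^d}C_t)\,dt\Big)\ \ge\ \frac{\|\phi\|_{L^\infty}\|\psi\|_{L^1}}{T},\]
which is the claim. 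Integrability of $(\bar g)_{\star_\hid}$ is then a byproduct: for $t\le T/2$ the geometric inequality bounds $\vol_d(P_{\R^d}C_t)$ by a fixed multiple of $\vol_{n+d}(C_t)$, which is integrable in $t$; for $t\ge T/2$ the projection volume is bounded.

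The step I expect to be the main obstacle is precisely this passage from level sets to functions. A naive layer-cake argument replaces $\|(\bar g)_{\oplus_\hid}\|_{L^\infty}$ only by $\int_0^T M(C_t)\,dt$ and is then stuck with $\int_0^T M(C_t)\vol_d(P_{\R^d}C_t)\,dt$, which it cannot split into a product, so the sharp constant is lost. The resolution is that $M(C_t)$ and $\vol_d(P_{\R^d}C_t)$ are monotone in the same direction, so Chebyshev's inequality factorizes that integral with no loss and returns exactly $\binom{n(\hid+1)}{n}$; recognizing that this correlation is available --- and that it is the only thing preventing a clean reduction to the geometric slice inequality --- is the heart of the argument.
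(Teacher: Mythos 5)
Your proof is correct. The route is genuinely different in its organization, though it rests on the same geometric core. You first change variables to reduce the statement to a clean single-function lemma: for any integrable log-concave $F$ on $\R^n\times\R^d$ with marginal $\phi(\xi)=\int_{\R^n}F(z,\xi)\,dz$ and fiberwise supremum $\psi(\xi)=\sup_z F(z,\xi)$, one has $\|\phi\|_{L^\infty}\|\psi\|_{L^1}\le\binom{n+d}{n}\|F\|_{L^\infty}\|F\|_{L^1}$. You then prove this by applying the classical ``slice'' Rogers--Shephard inequality, $M(C)\,\vol_d(P_{\R^d}C)\le\binom{n+d}{n}\vol_{n+d}(C)$, to the superlevel sets $C_t=\{F\ge t\}$ and factorizing with Chebyshev's integral inequality (using that $t\mapsto M(C_t)$ and $t\mapsto\vol_d(P_{\R^d}C_t)$ are both non-increasing). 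The paper instead introduces the convolution bodies $\mathcal{C}^{\hid}_{\theta,t}(\bar g)$, proves the set inclusion of Lemma~\ref{l:theta_convolution} (a Brunn--Minkowski argument whose integration over $\theta$ produces the same Beta integral and hence the same binomial constant), and then factorizes by normalizing $\|g_i\|_{L^\infty}=1$ and invoking $\min\{a,b\}\ge ab$ for $a,b\in[0,1]$. The two factorization tricks are equivalent in content --- Chebyshev's inequality for monotone layers versus the pointwise $\min\ge$ product bound after normalization --- and the Beta computation is identical; the underlying inequality $\vol_{nm}(\mathcal{C}^m_{0,t})\le\binom{n(\hid+1)}{n}\vol_{n(m+1)}(C_t)/M_t$ used in the paper is exactly your slice Rogers--Shephard lemma applied to $C_t$. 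What your formulation buys is modularity and a more transparent single-function statement (it applies to any log-concave $F$ on a product, not just to $\widetilde G$, and makes clear that the constant is the classical Rogers--Shephard constant for the slice/projection split $n+d$). The paper's convolution-body framework, while less transparent for this particular theorem, is the machinery it reuses for Theorem~\ref{t:hi_fun_RS_one} and its equality case, so the choice there is a matter of keeping one toolbox.
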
\noindent
When setting $f_0=f$ and replacing, for $i=1,\dots,\hid$, each $f_i$ with $f(-\cdot)$ in Theorem~\ref{t:hi_fun_RS}, we can obtain a sharper Rogers-Shephard-type inequality using a slightly different approach.
\begin{theorem}
\label{t:hi_fun_RS_one}
    Fix $n,\hid\in\N$. Let $f\in \lc$ and set $\bar{f}\! \coloneqq\!\! (\!f(\cdot),\!f(-\cdot),\!\cdots\!,\!f(-\cdot)\!)$. Then:
    \begin{equation}\label{e:hi_fun_RS_one}
       \|(\bar{f})_{\star_{\hid}}\|_1 \leq \binom{n(\hid + 1)}{n} \|f\|_\infty^m \|f\|_\frac{1}{m},
    \end{equation}
    where equality holds if and only if $f/\|f\|_\infty$ is the characteristic function of an $n$-dimensional simplex.
\end{theorem}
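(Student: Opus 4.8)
The plan is to reduce the inequality, level set by level set, to Schneider's higher-order Rogers--Shephard inequality \eqref{eq:RSell} for the convex bodies $\{f\ge s\}$, and then to recombine the resulting family of estimates by a single application of Minkowski's integral inequality. Throughout, write $M:=\|f\|_{L^\infty(\R^n)}$ and $v(s):=\vol_n(\{f\ge s\})$, a function that is finite for $s>0$, nonincreasing, and supported on $(0,M]$.

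The first step is to express both sides through superlevel sets. Layer--cake integration gives $\|(\bar f)_{\star_\hid}\|_{L^1(\R^{n\hid})}=\int_0^\infty\vol_{n\hid}(\{(\bar f)_{\star_\hid}>t\})\,dt$, where the integrand vanishes once $t\ge M^{\hid+1}$; and the substitution $s=u^\hid$ in $\int_{\R^n}f^{1/\hid}=\int_0^\infty\vol_n(\{f>u^\hid\})\,du$ yields $\|f\|_{L^{1/\hid}(\R^n)}=\bigl(\tfrac1\hid\int_0^{M}v(s)\,s^{\frac1\hid-1}\,ds\bigr)^{\hid}$. The geometric heart of the argument is the set inclusion
\[
\bigl\{\bar x\in\R^{n\hid}\colon (\bar f)_{\star_\hid}(\bar x)>t\bigr\}\ \subseteq\ \D\bigl(\{f\ge t\,M^{-\hid}\}\bigr),
\]
which is immediate from the definition of $(\bar f)_{\star_\hid}$: if $z$ and $\bar x$ satisfy $f(z)\,f(z-x_1)\cdots f(z-x_\hid)>t$, then each of these $\hid+1$ factors is at most $M$ and hence exceeds $t\,M^{-\hid}$, so $z$ lies in $\{f\ge t\,M^{-\hid}\}$ and in every translate $x_i+\{f\ge t\,M^{-\hid}\}$, which is precisely the statement that $\bar x\in\D(\{f\ge t\,M^{-\hid}\})$.

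Combining these, applying \eqref{eq:RSell} to the convex body $\{f\ge s\}$ for each $s\in(0,M)$, and substituting $t=M^{\hid}s$, I obtain
\[
\|(\bar f)_{\star_\hid}\|_{L^1(\R^{n\hid})}\ \le\ M^{\hid}\!\int_0^{M}\!\vol_{n\hid}\bigl(\D(\{f\ge s\})\bigr)\,ds\ \le\ \binom{n(\hid+1)}{n}\,M^{\hid}\!\int_0^{M}\! v(s)^{\hid}\,ds .
\]
The remaining estimate $\int_0^{M}v(s)^{\hid}\,ds\le\|f\|_{L^{1/\hid}(\R^n)}$ is exactly Minkowski's integral inequality for the nonincreasing function $v$: writing $v(s)=\int_0^\infty\mathbf 1_{\{v>\lambda\}}(s)\,d\lambda$ and observing that each $\{v>\lambda\}$ is an interval $(0,b_\lambda)$ with $\|\mathbf 1_{(0,b_\lambda)}\|_{L^{\hid}(ds)}=b_\lambda^{1/\hid}=\tfrac1\hid\int_{0}^{b_\lambda}s^{\frac1\hid-1}\,ds$, the triangle inequality in $L^{\hid}(ds)$ gives $\|v\|_{L^{\hid}(ds)}\le\tfrac1\hid\int_0^{M}v(s)\,s^{\frac1\hid-1}\,ds$. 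This proves the inequality.

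For the equality case one traces equality through all three steps for a.e.\ $t$: equality in \eqref{eq:RSell} forces $\{f\ge s\}$ to be an $n$-simplex for a.e.\ $s$; equality in Minkowski's integral inequality forces --- for $\hid\ge 2$, where it is strict off the extremizers --- all the indicators $\mathbf 1_{(0,b_\lambda)}$ to coincide, i.e.\ $v$ to be a constant multiple of a single $\mathbf 1_{(0,b^{\ast})}$; and since $\{f\ge s\}$ is a nonempty, full-dimensional convex body for every $s<M$, this can only hold if $v$ is constant on $(0,M)$, i.e.\ $f=M\,\mathbf 1_{\Delta_n}$ for an $n$-simplex $\Delta_n$. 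Conversely, $(\overline{\mathbf 1_{\Delta_n}})_{\star_\hid}=\mathbf 1_{\D\Delta_n}$ (up to translation) together with the equality case of \eqref{eq:RSell} gives back equality. I expect the genuine obstacle to be the case $\hid=1$, where Minkowski's integral inequality degenerates to an identity and the above rigidity argument collapses: there one must instead exploit the sharpness of the inclusion $\{(\bar f)_{\star_1}>t\}\subseteq\D(\{f\ge t\,M^{-1}\})$ together with the equality case of the classical Rogers--Shephard inequality to conclude that the bodies $\{f\ge s\}$ are pairwise homothetic, hence (being simplices) that $f$ is a multiple of a simplex indicator, following the argument of \cite{AlGMJV}.
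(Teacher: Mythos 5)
Your proof is correct and takes essentially the same route as the paper's: reduce to Schneider's higher-order Rogers--Shephard inequality \eqref{eq:RSell} applied to the superlevel sets $\{f\ge s\}$, then recombine by Minkowski's integral inequality in $L^{\hid}$. Your key inclusion $\{(\bar f)_{\star_\hid}>t\}\subseteq\D(\{f\ge tM^{-\hid}\})$ is precisely the paper's chain $\mathcal{C}^{\hid}_{0,t}\subset\D\bigl(\mathcal{A}^{\hid}_{t^{m+1}}(f)(\bar o)\bigr)$ unwound (with $\mathcal{A}^{\hid}_{t^{m+1}}(f)(\bar o)=\{f\ge tM\}$), and your one-dimensional reformulation of the Minkowski step through $v(s)=\int_0^\infty\mathbf{1}_{\{v>\lambda\}}(s)\,d\lambda$ yields the same estimate and the same rigidity as the paper's application of Minkowski's integral inequality to $\chi_{\{f/\|f\|_{L^\infty(\R^n)}\ge t\}}(z)$.
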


\medskip

The outline of this work is the following. In Section~\ref{sec:prelim}, we recall some basic facts from convex geometry and the theory of log-concave functions. In Section~\ref{sec:class}, we introduce Ball bodies, discuss their history and extend them to $p \in (-1,0)$. We study in Section~\ref{sec:ball_bodies} their asymptotics. Section~\ref{sec:hi} is dedicated to proving Theorem~\ref{t:Zhang_fun_hi}, Corollary~\ref{cor:zero}, and Theorem~\ref{t:radial_hi_fun_set}. Finally, Section~\ref{sec:rs} is devoted to the proofs of Theorem~\ref{t:hi_fun_RS} and Theorem~\ref{t:hi_fun_RS_one}. 

\section{Preliminaries}
\label{sec:prelim}
One of the cornerstones of the theory of convex bodies is the Brunn-Minkowski inequality. This result states that if $K$ and $L$ are convex bodies and $\lambda\in[0,1]$, then
\begin{equation}\label{eq:BM}
    \vol_n((1-\lambda) K + \lambda L) \geq \vol_n(K)^{1-\lambda}\vol_n(L)^\lambda.
\end{equation} 
Using the homogeneity of the volume, one may easily check that \eqref{eq:BM} is equivalent to 
\begin{equation}\label{e:BM}
    \vol_n((1-\lambda) K + \lambda L)^\frac{1}{n} \geq (1-\lambda)\vol_n(K)^\frac{1}{n}+\lambda \vol_n(L)^\frac{1}{n},
\end{equation}
where equality holds for some $\lambda \in (0,1)$ if and only if $K$ and $L$ are homothetic (i.e., $K=tL +x$, for some $t \in \R_+, x \in \R^n$).

 The functional counterpart of the Brunn-Minkowski inequality \eqref{eq:BM} is the Pr\'ekopa-Leindler inequality (\cite{PreL1, PreL2}, see also \cite{Lei72b, BL76}). In its general statement, it reads as follows: if $\lambda \in (0,1)$ and $h,f,g : \R^n \longrightarrow \R_{+}$ are non-negative measurable functions such that, for any $x,y \in \R^n$, $$h((1-\lambda)x+\lambda y)\geq f(x)^{1-\lambda}g(y)^\lambda,$$
then
\begin{equation}\int_{\R^n}h(z)\,\dlat z \geq \left(\int_{\R^n} f(x) \,\dlat x\right)^{1-\lambda}\left(\int_{\R^n} g(y) \,\dlat y\right)^{\lambda}.
\label{eq:PL}
\end{equation}
In particular, if $f,g \in \lc$, \eqref{eq:PL} holds with $h=f \star g$. The equality cases are treated in the work of Dubuc \cite{SD77}. We refer to Gardner's survey \cite{G20} for more details on the Brunn-Minkowski inequality and its connections to the Pr\'ekopa-Leindler inequality. 

We will also make use of the following fact concerning integrable, log-concave functions; see, e.g. \cite{BK07}.
\begin{proposition}
\label{p:integrable}
    Let $f:\R^n\longrightarrow \R_+$ be a non-identically zero, log-concave function. The following properties are equivalent:
    \begin{enumerate}
        \item[(i)] Integrability: $f \in \lc$,
        \item[(ii)] Coercivity: there exists some constants $A,B>0$ such  that for all $x\in \R^n$, 
\begin{equation}\label{eq:majo_log_concave}
f(x)\le A\, e^{-B\, |x|}.
\end{equation}
    \end{enumerate}
    In particular, $f$ has finite moments of all orders.
\end{proposition}

\subsection{The LYZ body of log-concave functions}
\label{sec:fun}
The notion of LYZ body was introduced by Lutwak, Yang, and Zhang \cite{LYZ06} to the Sobolev space $W^{1,1}(\R^n)$. Later, it was extended by Wang \cite{TW12} to the family of functions of bounded variation $\mathrm{BV}(\R^n)$. This construction appears implicitly in the seminal work by Zhang \cite{GZ99} on the affine Sobolev inequality. In our exposition, we are interested in this construction restricted to the space $\lc$, which is a subset of $\mathrm{BV}(\R^n)$ (compare \cite{LR23} for an insightful treatment of this inclusion). As the case of $\lc$ allows several simplifications, we propose, for the convenience of the reader, a streamlined proof of the construction of the LYZ body for integrable, log-concave functions. 

We need the following version of the co-area formula for log-concave functions, which is a consequence of \cite[Proposition 2.4 and Proposition 2.6]{LR23}. 
\begin{lemma} \label{lemma:coarea_rotem}
    Let $f\in \lc$ and let $h:\R^n\longrightarrow\R_+$ be a real-valued, $1$-homogeneous function on $\R^{n}\setminus\{0\}$. Then, 
    \begin{equation}
    \label{eq:coarea}
    \begin{split}
        \int_{\Rn}h(-\nabla f)\,\dlat x &+ \!\!\int_{\partial\supp(f)} h(n_{\supp(f)} (y)) f(y)\dlat\mathcal{H}^{n-1}(y)
        \\
        &= \int_0^{\|f\|_\infty}\int_{\partial \{f\geq t\}} h(n_{\{f \geq t\}}(y))\,\dlat\mathcal{H}^{n-1}(y)\,\dlat t.
    \end{split}
    \end{equation}
\end{lemma}\noindent
We can now justify Definition \ref{def:LYZ_body}. We recall that, for a convex body $K$, its area measure $S_K$ is the pushforward of $\mathcal{H}^{n-1}$ from $K$ to $\s$ by $n_{K}$.
\begin{proposition}\label{prop:existence_LYZ}
    Given $f \in \lc$, there exists a unique Borel measure $\mu_f$ on $\s$ which is centered and not concentrated on any great subsphere, such that  
    \begin{equation}
    \begin{split}
    \int_{\s}&h(u)\,\dlat{\mu_f}(u)
    \\
    &=\int_{\Rn}h(-\nabla f)\,\dlat x + \int_{\partial\supp(f)} \!\!\!\!\!h(n_{\supp(f)} (y)) f(y)\dlat\mathcal{H}^{n-1}(y)
    \end{split}
    \label{eq:change_of_variable}
    \end{equation}
    for every $1$-homogeneous function $h$ on $\Rn\setminus\{o\}$.

    In particular, there exists a unique convex body $\langle f \rangle$ with center of mass at the origin and such that $\mu_f=S_{\langle f \rangle}$.
\end{proposition}
\begin{proof}
    By \eqref{eq:change_of_variable}, the definition of area measure, we infer 
    \begin{align*}
        & \int_{\Rn}\!\!\!h(-\nabla f)\,\dlat x + \!\!\int_{\partial\supp(f)} \!\!\!\!\!\!\!\!h(n_{\supp(f)} (y)) f(y)\dlat\mathcal{H}^{n-1}(y)\\
        =& \int_0^{\|f\|_\infty}\int_{\partial \{f\geq t\}} h(n_{\{f \geq t\}}(y))\,\dlat\mathcal{H}^{n-1}(y)\,\dlat t\\
        =& \int_0^{\|f\|_\infty}\int_{\s} h(u) \dlat S_{\{ f \geq t\}}(u) \dlat t.
    \end{align*}
    The last line provides a positive linear functional on $\s$, which is bounded thanks to Proposition \ref{p:integrable} (in particular, the coercivity of $f$ implies that the level sets $\{f \geq t\}$ are compact for every $t \geq 0$). Therefore, by the Riesz representation theorem, there exists a unique Radon measure $\mu_f$ on $\s$ such that \[  \int_{\s}h(u)\,\dlat{\mu_f}(u)=\int_0^{\|f\|_\infty}\int_{\s} h(u) \dlat S_{\{ f \geq t\}}(u) \dlat t,\] thus proving \eqref{eq:change_of_variable}. The fact that $\mu_f$ is centered follows from the fact that each $S_{\{f \geq s\}}$ is centered (compare \cite[Section 8]{Sh1}). The fact that $\mu_f$ is not concentrated on any great subsphere follows from the choice $h(x)=|\langle x, u \rangle|, u \in \s$, as proved in \cite[Proposition 3.1]{FTLR25}.

    Finally, the existence of $\langle f \rangle$, up to translation, follows from the classical Minkowski problem.  By choosing $\langle f \rangle$ to have center of mass at the origin, it becomes unique. See, for example, \cite[Section 8.2]{Sh1}.
\end{proof}

In usual applications in the literature (compare \cite{ML12,TW12,LUD,CLM17_2}), the LYZ body of $f$ is the unique origin symmetric convex body whose surface area measure equals the \textit{even} part of $\mu_f$. The symmetric LYZ body is a fundamental tool in establishing affine-Sobolev-type inequalities \cite{GZ99,LYZ06,HJM16,TW12,LYZ02,HL25}. Similar applications of the asymmetric body $\langle f \rangle$ can be found in \cite{HSX12,HLPRY25}. We emphasize that, when dealing with $f \in \lc$, the body $\langle f \rangle$ is not necessarily symmetric, since the level sets of $f$ are always convex and bounded and, therefore, the measure $\mu_f$ we retrieve in Proposition \ref{prop:existence_LYZ} is centered by construction, without requiring any further step.

Finally, we remark that the work of Rotem \cite{LR23} on the total variation measure of log-concave functions was a culmination of a series of works \cite{CF13, CEK15, LR22, LR23}, showing in particular that it appears naturally as the first variation of the integral of suitable perturbations of log-concave functions. See \cite{NDZY25,FTLR25,HLXZ24,UJ25} for further developments.

\section{Sets associated with functions}
\label{sec:class}
This section is dedicated to obtaining \eqref{eq:chain_og}, \eqref{e:set_inclusion}, and Theorem~\ref{t:radial_hi_fun_set} as special cases of a more general statement. We begin by recalling that a function $f:\R^n\longrightarrow\R_+$ is $s$-concave, $s>0$, if it is upper semi-continuous, not identically zero, and, for $\lambda \in [0,1]$ and $x,y\in\R^n$ such that $f(x)f(y)>0$, one has
 \begin{equation}f((1-\lambda)x + \lambda y) \geq \left((1-\lambda)f(x)^s+\lambda f(y)^s\right)^\frac{1}{s}.
\label{eq:s_concave}
\end{equation}
As $s\to 0^+$, one recovers the log-concavity \eqref{eq:log}. By Jensen's inequality, every $s$-concave function is also $\log$-concave on its support. In particular, the implication that integrability implies coercivity in Proposition~\ref{p:integrable} still holds when $s>0$. Indeed, if $f$ is integrable and $s$-concave with $s>0$, then $\|f\|_\infty<\infty$ and $f \leq \|f\|_\infty\cdot \chi_{\supp(f)}$; we conclude by applying Proposition~\ref{p:integrable} to $\chi_{\supp(f)}$, which is log-concave since $\supp(f)$ is a compact, convex set. For ease of presentation, we may refer to $\log$-concavity as $s$-concavity, with $s=0$, in some of our results in this section.

To phrase these results, we start by recalling the Mellin transform. Although it is a classical operator from complex analysis, we side-step the associated regularity considerations by discussing only its application to $s$-concave functions, with $s\geq 0$ fixed.

 \subsection{The Mellin transform of functions in one variable.} Let $\psi:\R_+\longrightarrow\R_+$ be an integrable, $s$-concave function, $s\geq 0$. Then, its Mellin transform is the analytic function
\begin{equation}
   \Mel{\psi}{p}= \begin{cases}
    \int_0^\infty t^{p-1}(\psi(t)-\psi(0))\,\dlat t & \text{for } p\in (-1,0),
    \\
    \int_0^\infty t^{p-1}\psi(t)\,\dlat t & \text{for } p>0.
    \end{cases}
    \label{eq:Mel}
\end{equation}
We mention in passing that an integrable, log-concave (e.g. $s$-concave) function has finite moments for all $p$ by Proposition~\ref{p:integrable}, thus justifying no restriction for when $p$ is positive. The fact that $\Mel{\psi}{p}$ is finite for $p\in (-1,0)$ (under the given assumptions on $\psi$) follows by a standard argument involving analytic continuation; see \cite{FLM20}.

Since $\psi$ is $s$-concave, it is right-differentiable at $0$. Thus, one can use the following formula, via integration by parts, for all $p>-1$, $p\neq 0$:
\begin{equation}
\label{eq:alt_mellin}
\Mel{\psi}{p} = \frac{1}{p}\int_0^{\infty}(-\psi^\prime(t))t^p\;\dlat t.
\end{equation}
This shows that the Mellin transform has a simple pole at $p=0$. We will never really consider $\Mel{\psi}{0}$, but we do use frequently the limit
\[
\lim_{p\to0^+}\left(p\Mel{\tilde \psi}{p}\right)^\frac{1}{p} = \exp\left(\int_0^{\infty}\left(-\frac{\psi^\prime(t)}{\|\psi\|_\infty}\right)\log(t) \dlat t\right),
\]
with the notation
\[
\tilde\psi = \frac{\psi}{\|\psi\|_\infty}.
\]

We will need the following result, which encodes some monotonicity properties of the Mellin transform. It was established by Milman and Pajor \cite[Lemma 2.1]{MP89} when $p>0$ (see also \cite[Lemma 2.2.4]{BGVV14}). For our aims, we further extend it to $p \in (-1,0]$. The proof is postponed to the appendix.
\begin{proposition}
\label{p:increasing_mono}
    Let $\psi:\R_+\longrightarrow\R_+$ be a bounded, measurable function that is right-differentiable at the origin. Define
    $$I_p(\psi) \!=\! \left(p\Mel{\tilde \psi}{p}\right)^\frac{1}{p} \!\!=\!\!\begin{cases} \left(\frac{p}{\|\psi\|_\infty}\int_0^\infty \psi(r)r^{p-1}\, \dlat r\right)^\frac{1}{p}, &p>0,
    \\
    \exp\left(\frac{1}{\|\psi\|_\infty}\int_0^\infty(-\psi(r))^\prime\log(r)\; \dlat r\right), &p=0,
    \\
    \!\left(\frac{p}{\|\psi\|_\infty}\int_0^\infty r^{p-1}(\psi(r)\!-\!\psi(0))\; \dlat r\right)^\frac{1}{p}, & p\in (-1,0).
    \end{cases}
    $$
    Then, $p\mapsto (I_p(\psi))^\frac{1}{p}$ is increasing for $p>0$, and is constant if and only if $\psi(r)=\|\psi\|_\infty\chi_{[0,a]}(r)$ almost everywhere for some $a>0$. 
    
    Additionally, if $\psi$ obtains its maximum at the origin and decays almost surely to zero at infinity, then $p\mapsto (I_p(\psi))^\frac{1}{p}$ is increasing for $p\in (-1,0)$, and is constant if and only if $\psi(r)=\|\psi\|_\infty\left(1-\chi_{[0,a]}(1/r)\right)$ almost everywhere for some constant $a>0$.
\end{proposition}

Consider the generalized Binomial coefficients
\begin{equation}
\label{eq:binomial_coefficients}
\binom{p+\frac{1}{s}}{p}=\left(p\cdot B\left(p,\frac{1}{s}+1\right)\right)^{-1} = \begin{cases}\frac{\Gamma\left(p+\frac{1}{s}+1\right)}{\Gamma\left(p+1\right)\Gamma\left(\frac{1}{s}+1\right)}, & s>0,
\\
\Gamma(p+1)^{-1}, & s=0.
\end{cases}
\end{equation}
Letting $\Psi$ be the digamma function and $\gamma$ the Euler-Mascheroni constant, we have 
\[
\lim_{p\to0^+}\binom{p+\frac{1}{s}}{p}^\frac{1}{p} = \begin{cases}
    \exp\left(\Psi\left(s+1\right)+\gamma\right), & s>0,
    \\
    e^\gamma,& s=0.
\end{cases}
\]
In \cite{KPY08}, Koldobsky, Pajor, and Yaskin proved the following result for log-concave functions, providing the reverse direction of Proposition~\ref{p:increasing_mono}. We state it more generally for $s$-concave functions, as established by Fradelizi, Li, and Madiman \cite{FLM20}.  
\begin{proposition}[The Mellin-Berwald inequality]
\label{p:mellin_berwald}
    Fix $s\geq 0$. Let $\psi : \R_+ \longrightarrow \R_+$ be an integrable, $s$-concave function obtaining its maximum at the origin. Define the function
\begin{equation}
    G_{\psi}(p)\!=\!\!\begin{cases}\left(\binom{p+\frac{1}{s}}{p}\left(p\Mel{\tilde \psi}{p}\right)\right)^\frac{1}{p}, &  p>-1,p\neq 0,
    \\
    \left(\!\lim_{\!p\to0^+}\!\!\!\binom{p+\frac{1}{s}}{p}^\frac{1}{p}\!\!\right)\!\exp\left(\!\int_0^{\infty}\!\!\left(\!\frac{-\psi^\prime(t)}{\|\psi\|_\infty}\!\right)\!\log(t) \!\dlat t\right)\!, & p=0.
    \end{cases}
    \label{eq:milman_psi}
\end{equation}
Then, $G_{\psi}(p)$ is non-increasing in $p$. It is a constant, say $G_{\psi}(p)\equiv \alpha>0$, if and only if $$\psi(t)=\begin{cases}\psi(0)\left(1-\frac{t}{\alpha}\right)_+^\frac{1}{s}, & s>0,
\\
\psi(0)e^{-\frac{t}{\alpha}}\chi_{(0,\infty)}(t), &s=0.
\end{cases}
$$
\label{p:mb}
\end{proposition}
\noindent Notice that the characterization of equality in Proposition~\ref{p:mb} is implied by the proof and is not explicitly stated; see \cite{LP25} for the details.

A natural question is the behaviour as $p \to \infty$ of $G_{\psi}(p)$. In the next proposition, we consider the case when $\psi$ has bounded support. We state it more generally for measurable functions, thus replacing the notion of support with that of essential support. Recall that the essential support of a measurable function $\psi:\R_+\longrightarrow \R_+$ is the closed set defined by 
$$\operatorname{ess}\supp(\psi)=\R_+ \setminus \Big\{t\in \R_+ \; : \; \exists r>0, \ 
\int_{t-r}^{t+r} \psi(t)\,\dlat t=0 \, \Big\}.$$
\begin{proposition}
    Let $\psi:\R_+\longrightarrow \R_+$ be a bounded, measurable function with essential support $[0,R]$ for some $R>0$ such that $\|\psi\|_\infty=1$. Then, \begin{equation}
    \lim_{p\to\infty}\left(p\Mel{\psi}{p}\right)^\frac{1}{p} = R.
    \label{eq:mellin_limit}
\end{equation}
\label{p:mellin_finite_supp}
\end{proposition}
The proof is again postponed until the appendix. The following corollary is an immediate consequence of Proposition~\ref{p:mellin_finite_supp}, the definition of $G_{\psi}(p)$ from \eqref{eq:milman_psi}, and the fact that
\begin{equation}
\label{eq:bimon_infty}
\lim_{p\to \infty}\binom{p+\frac{1}{s}}{p}^\frac{1}{p} = \chi_{(0,\infty)}(s)=\begin{cases}
    1, & s>0,
    \\
    0,& s=0.
\end{cases}
\end{equation} 
\begin{corollary}
 Let $\psi:\R_+\longrightarrow \R_+$ be an $s$-concave, $s\geq 0$, function obtaining its maximum at the origin that is supported on $[0,R]$ for some $R>0$. Then,
     \begin{equation}
\label{eq:min_G}
\min G_{\psi}(p)=\lim_{p\to \infty}G_\psi(p)= R\chi_{(0,\infty)}(s).
\end{equation}
\end{corollary}

In the next proposition, we consider the case when the support of $\psi$ is all of $\R_+$. Among $s$-concave functions, only log-concave functions can satisfy this assumption.
\begin{proposition}
\label{p:p_to_infty}
    Let $\psi\in \operatorname{LC}_1$ be such that it obtains its maximum at the origin and $\supp(\psi)=\R_+$. Then, 
    \begin{equation}
    \label{eq:to_infty_and_beyond}
    \lim_{p\to \infty} \left(p\Mel{\tilde \psi}{p}\right)^\frac{1}{p} =\infty.
    \end{equation}
    Additionally, there exists $C>0$ such that, for all $p>0$,
    \begin{equation}
    \label{eq:function_bounded}
         \left(p\Mel{\tilde \psi}{p}\right)^\frac{1}{p}\leq C\cdot\Gamma\left(1+p\right)^\frac{1}{p},
    \end{equation} with equality if and only if $\psi$ is an exponential. Therefore,
    \begin{equation}
    \label{eq:limit_bounded}
    \lim_{p\to\infty}G_\psi(p)=\lim_{p\to\infty}\left(\frac{p\Mel{\tilde \psi}{p}}{\Gamma\left(p+1\right)}\right)^\frac{1}{p}\leq C.
    \end{equation}
    Moreover, the function $\psi(t)=e^{-\frac{t^2}{2}}$ satisfies 
    \begin{equation}
    \label{eq:limit_zero}
      \lim_{p\to \infty}G_\psi(p)=\lim_{p\to\infty}\left(\frac{p\Mel{\tilde \psi}{p}}{\Gamma\left(p+1\right)}\right)^\frac{1}{p}=0.
    \end{equation}
\end{proposition}
\begin{proof}
Begin by defining, for $k \in \N$, $\psi_k:=\frac{\psi}{\|\psi\|}\cdot \chi_{[0,k)}$. Notice that $\psi_k$ has maximum at the origin, and, in particular, $\|\psi_k\|_{\infty}=1.$ Since $\psi/\|\psi\|_\infty$ dominates $\psi_k$, and $\psi$ is integrable, we may use the dominated convergence theorem and obtain that, for every $p>0$, 
\[
\left(p\Mel{\tilde \psi}{p}\right)^\frac{1}{p} = \lim_{k\to\infty}\left(p\Mel{\psi_k}{p}\right)^\frac{1}{p}.
\]
Taking the limit in $p$, we have by Proposition~\ref{p:mellin_finite_supp}
\begin{align*}
\lim_{p\to\infty}\left(p\Mel{\tilde \psi}{p}\right)^\frac{1}{p} &=\lim_{p\to\infty} \lim_{k\to\infty}\left(p\Mel{\psi_k}{p}\right)^\frac{1}{p}
\\
&= \lim_{k\to\infty}\lim_{p\to\infty}\left(p\Mel{\psi_k}{p}\right)^\frac{1}{p}
= \lim_{k\to\infty}k=\infty,
\end{align*}
where we used the monotone convergence theorem (of sequences) to interchange the limits in $p$ and $k$. This establishes \eqref{eq:to_infty_and_beyond}.

    As for \eqref{eq:function_bounded} and \eqref{eq:limit_bounded}, we have by the assumptions on $\psi$ and Proposition~\ref{p:integrable}, that there exists $C>0$ such that
    \[
    \psi(t) \leq \|\psi\|_\infty  e^{-\frac{t}{C}}, \quad \forall\; t>0.
    \]
    Consequently, for every $p>0$
    \begin{align*}
    \frac{p}{\|\psi\|_\infty}\int_0^{\infty}\psi(t)t^{p-1} \; \dlat t 
    \leq p\int_0^{\infty}e^{-\frac{t}{C}}t^{p-1}\; \dlat t = C^p\cdot\Gamma(p+1),
    \end{align*}
    which yields the claimed inequalities. 
    
    Finally, we take $\psi(t) = e^{-\frac{t^2}{2}}$ and directly compute
    \[
    p\Mel{\tilde \psi}{p} = p\int_0^{\infty}e^{-\frac{t^2}{2}}t^{p-1}\;\dlat t= 2^{\frac{p}{2}}\Gamma\left(1+\frac{p}{2}\right).
    \]
    Therefore, by Stirling's approximation,
    \[
    \lim_{p\to\infty}\left(\frac{p\Mel{\tilde \psi}{p}}{\Gamma\left(p+1\right)}\right)^\frac{1}{p} =0.
    \]
     
\end{proof}
Next, we will need the following lemma to analyze the behaviour of the Mellin transform as $p\to -1$; see, e.g., \cite[Lemma 4]{HL25} for a proof.
\begin{lemma} 
\label{l:fractional_deriv}
If $\varphi:\R_+  \longrightarrow \R_+$ is a measurable function with $\lim _{t \rightarrow 0^{+}} \varphi(t)=$ $\varphi(0)$ and such that $\int_0^{\infty} t^{p_0} \varphi(t) \mathrm{d} t<\infty$ for some $p_0 \in(-1,0)$, then
$$
\lim _{p \rightarrow (-1)^{+}}(1+p) \int_0^{\infty} t^{p} \varphi(t) \mathrm{d} t=\varphi(0) .
$$
\end{lemma}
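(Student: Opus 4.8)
\textbf{Proof plan for Lemma~\ref{l:fractional_deriv}.}
The plan is to split the integral $\int_0^\infty t^{-s}\varphi(t)\,\dlat t$ at $t=1$ and handle the two pieces separately, the key point being that only the behaviour of $\varphi$ near $0$ survives the limit $s\to 1^-$. First I would observe that on $[1,\infty)$ we have $t^{-s}\le t^{-s_0}$ for $s$ close to $1$ (indeed for all $s\in[s_0,1]$, since $t\ge 1$), so $\int_1^\infty t^{-s}\varphi(t)\,\dlat t\le \int_1^\infty t^{-s_0}\varphi(t)\,\dlat t\le \int_0^\infty t^{-s_0}\varphi(t)\,\dlat t<\infty$ is bounded uniformly in $s$; multiplying by $(1-s)\to 0$ kills this term. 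So the whole contribution comes from $(1-s)\int_0^1 t^{-s}\varphi(t)\,\dlat t$, and the target is to show this tends to $\varphi(0)$.

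Next I would use the exact value $(1-s)\int_0^1 t^{-s}\,\dlat t = (1-s)\cdot\frac{t^{1-s}}{1-s}\big|_0^1 = 1$ for $s<1$, which lets me write
\begin{equation*}
(1-s)\int_0^1 t^{-s}\varphi(t)\,\dlat t - \varphi(0) = (1-s)\int_0^1 t^{-s}\bigl(\varphi(t)-\varphi(0)\bigr)\,\dlat t.
\end{equation*}
Now fix $\varepsilon>0$. By the hypothesis $\lim_{t\to 0^+}\varphi(t)=\varphi(0)$, choose $\delta\in(0,1)$ with $|\varphi(t)-\varphi(0)|<\varepsilon$ for $t\in(0,\delta)$. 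On $(0,\delta)$ the integrand is bounded in absolute value by $\varepsilon\, t^{-s}$, and $(1-s)\int_0^\delta t^{-s}\,\dlat t=\delta^{1-s}\le 1$, so that piece contributes at most $\varepsilon$ uniformly in $s\in(s_0,1)$. On $[\delta,1]$ we have $t^{-s}\le \delta^{-s}\le\delta^{-1}$ (for $s\le 1$), and $\varphi(t)-\varphi(0)$ is integrable there (finite, since $\int_0^\infty t^{-s_0}\varphi<\infty$ forces $\varphi\in L^1_{\mathrm{loc}}((0,1])$ and $\varphi(0)$ is a constant); thus $(1-s)\int_\delta^1 t^{-s}|\varphi(t)-\varphi(0)|\,\dlat t \le (1-s)\,\delta^{-1}\!\int_\delta^1 |\varphi(t)-\varphi(0)|\,\dlat t \to 0$ as $s\to 1^-$. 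Combining, $\limsup_{s\to 1^-}\bigl|(1-s)\int_0^1 t^{-s}\varphi(t)\,\dlat t-\varphi(0)\bigr|\le\varepsilon$, and since $\varepsilon>0$ was arbitrary the limit is $\varphi(0)$.

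The only mild technical obstacle is justifying that $\varphi$ is locally integrable on $(0,1]$ so that the $[\delta,1]$ estimate makes sense: this follows because $t^{-s_0}\ge \delta^{-s_0}>0$ there is bounded below by a positive constant on $[\delta,1]$, hence $\int_\delta^1\varphi(t)\,\dlat t\le \delta^{s_0}\int_\delta^1 t^{-s_0}\varphi(t)\,\dlat t<\infty$ (using $\varphi\ge 0$, which holds as $\varphi:\R_+\to\R_+$). Beyond that, the proof is a standard "approximate identity" argument: the measures $(1-s)t^{-s}\,\dlat t$ on $(0,1]$ form an approximate identity concentrating at $0$ as $s\to 1^-$, and testing against the (essentially) continuous-at-$0$ function $\varphi$ recovers $\varphi(0)$. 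No deeper input (no convexity, no monotonicity) is needed.
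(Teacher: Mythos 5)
Your proof is correct, and since the paper offers no proof of its own for this lemma (it simply cites \cite[Lemma 4]{HL22}), there is nothing to compare against directly in the source; your argument is the standard and expected one for this type of Abelian statement. Splitting at $t=1$, killing the tail via $t^{-s}\le t^{-s_0}$ for $t\ge 1$ and $s\ge s_0$, using the exact normalization $(1-s)\int_0^1 t^{-s}\,\mathrm{d}t=1$ to reduce to an approximate-identity estimate, and then handling $(0,\delta)$ by continuity at $0$ and $[\delta,1]$ by boundedness times $(1-s)\to 0$ is precisely how this lemma is typically proved, and every step in your write-up is justified (in particular, local integrability of $\varphi$ on $[\delta,1]$ follows from $t^{-s_0}\ge\delta^{-s_0}$ there, and integrability of $t^{-s}\varphi$ near $0$ for $s\in(s_0,1)$ follows from the boundedness of $\varphi$ near $0$ supplied by the continuity hypothesis).
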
\noindent
This procedure is very classical, and is sometimes referred to as \textit{fractional derivative}. See, for example, Koldobsky's monograph \cite{AK05} for further details.

\subsection{Ball bodies.}
\label{sec:ball_bodies}
We begin this section with two definitions.
\begin{definition}
\label{def:star}
    A set $L\subset \R^n$ is a \textit{star-shaped set} (with respect to the origin) if $[o,x]\subset L$ for all $x\in L$. Furthermore, $L$ is a \textit{star body} if it is a star-shaped set that is compact with non-empty interior, and if its radial function from \eqref{eq:radial_function} is continuous on $\R^n\setminus\{o\}$.
\end{definition}\noindent
Clearly, every convex body containing the origin is a star body.

\begin{definition}
\label{def:keith_ball_bodies}
Let $g:\R^n \longrightarrow \R_+$ be a bounded, measurable function. Then, its Ball body is the star-shaped set $K_p(g)$ whose radial function is given by
\begin{equation}
    \label{eq:kbb}
    \begin{split}
    \rho_{K_p(g)}(\theta)&= \left(p\Mel{\widetilde{g(\cdot\theta)}}{p}\right)^\frac{1}{p}
    \\
    &=\begin{cases} \left(\frac{p}{\|g\|_\infty}\int_0^\infty g(r\theta)r^{p-1}\, \dlat r\right)^\frac{1}{p}, &p>0,
    \\
    \exp\left(\frac{1}{\|g\|_\infty}\int_0^\infty(-\frac{\partial}{\partial r}g(r\theta))\log(r)\; \dlat r\right), &p=0,
    \\
    \left(\frac{p}{\|g\|_\infty}\int_0^\infty r^{p-1}(g(r\theta)-g(o))\; \dlat r\right)^\frac{1}{p}, & p\in (-1,0).
    \end{cases}
    \end{split}
\end{equation}
For the formula $p=0$, $\frac{\partial}{\partial r}$ denotes the one-sided derivative of $g(r\theta)$ in $r$, and we additionally require that this exists for almost all $\theta$ and $r$ when considering $p=0$.
\end{definition}
The condition concerning differentiability is satisfied when $g$ is $\log$-concave; its one-sided derivatives exist since the convex function $r\mapsto-\log g(r\theta)$ has one-sided derivatives everywhere on its domain. 

 For $p>0$, if $g$ has finite $(p-n)$th moment, then $\rho_{K_{p}(g)} \in L^{p}(\s)$, which, in-turn, yields $\vol_{n}(K_{p}(g)) < \infty$: indeed, from Jensen's inequality and an application of polar coordinates, we have
\begin{equation}
\label{eq:finite_vol}
\begin{split}
\frac{n}{p}\vol_n(\B)^{1-\frac{p}{n}}\vol_{n}(K_{p}(g))^\frac{p}{n} &\leq \frac{1}{p}\int_{\s}\!\rho_{K_{p}(g)}(\theta)^{p}\,\dlat\theta 
\\
&= \int_{\R^n}\!\!\!g(x)|x|^{p-n}\,\dlat x.
\end{split}
\end{equation}
Under the stronger assumption that $r\mapsto g(r\theta)$ has finite, strictly positive, $(p-1)$th moment for all $\theta\in\s$, or equivalently $0<\rho_{K_p(g)}<\infty$, the set $K_p(g)$ is then a star body. 

In particular, the bodies $K_p(g)$ were originally introduced by K. Ball \cite{Ball88} when $p>0$ and the function $g$ is $\log$-concave and even; Gardner and Zhang \cite{GZ98} later considered the case of not necessarily even, log-concave functions. We summarize their results in the following proposition, asserting that, if $f\in\lc$, then $K_p(f)$ is convex.
\begin{proposition}[Theorem 5 in \cite{Ball88} and Corollary 4.2 in \cite{GZ98}]
\label{p:radial_ball}
   If $f \in \lc$ then, for every $p> 0,$ the function on $\s$ given by 
    \begin{equation}
    \label{eq:keith_ball_body}
    \theta\mapsto\left(p\int_0^\infty f(r\theta)r^{p-1}\dlat \, r\right)^\frac{1}{p}\end{equation}
    defines the radial function of a convex body containing the origin, $K_p(f)$. If $f$ is even, then $K_p(f)$ is origin-symmetric.
\end{proposition}
When $p\in (-1,0)$, an analogous requirement to \eqref{eq:finite_vol} is that $g$ has maximum at the origin and the function $x\mapsto g(o)-g\left(\frac{x}{|x|^2}\right)$ has finite $(-p)$-moment. Indeed, from a change of variables
\begin{align*}
p \int_0^\infty r^{p-1}(g(r\theta)-g(o))\dlat \, r = |p|\int_{0}^{\infty}r^{|p|-1}\left(g(o)-g\left(\frac{\theta}{r}\right)\right)\dlat \, r,
\end{align*}
and then we can reduce to the case $p >0$.

We notice that, even though our extension of Ball's bodies to $p\in (-1,0]$ via the Mellin transform appears to be new, it was anticipated in the works \cite{FLM20,GZ98,KPY08}. It is unknown if $K_p(f)$ is convex when $f$ is log-concave for $p\in (-1,0)$. Nevertheless, it will still be a star body. As an application of our new definition, we show that the now classical bodies $R_p K$ fall into the same picture.

\begin{proof}[Proof of Proposition~\ref{p:equivalence}]
We first consider the case when $p\neq 0$.
The equivalence between \eqref{eq:radial_mean_og} and \eqref{radial_ell} is an application of Fubini's theorem. Indeed, for $p>0$, we have
\begin{align*}
\int_{K} \rho_{K-x}(\theta)^{p} \dlat \, x &= p\int_K\int_0^{\rho_{K-x}(\theta)}r^{p-1}\dlat \, r\,\dlat \, x
\\
&=p \int_{0}^{\rho_{D K}(\theta)}\left(\int_{K \cap(K+r \theta)} \dlat \, x\right) r^{p-1} d r 
\\
&=p \int_{0}^{\rho_{D K}(\theta)} g_{K}(r \theta) r^{p-1} d r,
\end{align*}
where, in the second step, we used the fact that $x\in K$ and $-r\theta\in K-x$ for all $0\leq r\leq \rho_{K-x}(\theta)$. Similarly, for $p\in (-1,0),$ we have 
\begin{align*}
&\int_{K} \rho_{K-x}(\theta)^{p} \dlat \, x =-p \int_K \int_{\rho_{K-x}(\theta)}^\infty  r^{p-1}\dlat \, r\dlat \, x
\\
&=-p\int_{0}^{\rho_{DK}(\theta)}\left(\int_{K\setminus {K\cap (K+r\theta)}}\dlat \, x\right)r^{p-1}\dlat \, r - p\int_K\int_{\rho_{DK}(\theta)}^\infty r^{p-1}\dlat \, r\dlat \, x.
\end{align*}
Adding and subtracting integration over $K\cap(K+r\theta),$ we obtain
\begin{align*}
\int_{K} \rho_{K-x}(\theta)^{p} \dlat \, x &=p\int_{0}^{\rho_{DK}(\theta)}(g_{K}(r\theta)-\vol_n(K))r^{p-1}\dlat \, r+\rho^p_{DK}(\theta)\vol_n(K)
\\
&=p\int_{0}^{\infty}(g_{K}(r\theta)-\vol_n(K))r^{p-1}\dlat \, r.
\end{align*}
From integration by parts, \eqref{radial_ell} re-writes as, for $p\neq 0$,
\begin{equation}\rho_{R_p K}(\theta)=\left(\int_0^{\rho_{DK}(\theta)}\frac{\partial}{\partial r}\left(-\frac{g_K(r\theta)}{\vol_n(K)}\right) r^{p}\dlat \, r\right)^\frac{1}{p}.
\label{eq:best_radial_form}
\end{equation}
Then, considering $p\to 0$, we obtain the second formula in \eqref{radial_ell}.

The fact that $R_p K$ is origin-symmetric follows the evenness of $g_K$. Finally, the convexity of $R_pK$ when $p>0$ follows from an application of Proposition~\ref{p:radial_ball}, which extends to $p=0$ via continuity. 
\end{proof}

Our next goal is to generalize \eqref{e:set_inclusion} to Ball bodies.  To do so, we define the following body, corresponding to $p=-1$.
\begin{proposition}
\label{p:limit_body}
    Let $g:\R^n\longrightarrow \R_+$ be a function that is log-concave on its support and obtains its maximum at the origin. Then, for all $\theta\in\s$,
    \[
    \lim_{p\to(-1)^+}(1+p)^\frac{1}{p}\rho_{K_p(g)}(\theta)= \left(-\frac{\partial}{\partial r}\frac{g(r\theta)}{\|g\|_\infty}\bigg|_{r=0^+}\right)^{-1},
    \]
    and, consequently, we define a star body $\tilde K_{-1}(g)$ by
    \[
    \rho_{\tilde K_{-1}(g)}(\theta) = \lim_{p\to(-1)^+}(1+p)^\frac{1}{p}\rho_{K_p(g)}(\theta)=\left(-\frac{\partial}{\partial r}\frac{g(r\theta)}{\|g\|_\infty}\bigg|_{r=0^+}\right)^{-1}.
    \]
\end{proposition}
We postpone the proof of Proposition~\ref{p:limit_body} to the appendix. We define the following constant for the asymptotics of the binomial coefficient \eqref{eq:binomial_coefficients} as $p$ goes to $-1$ from above, $$c(s)\coloneqq\lim_{p\to(-1)^{+}}\left(\frac{1}{1+p} \binom{p+\frac{1}{s}}{p}\right)^{\frac{1}{p}},$$
obtaining from direct computations the formula
\begin{equation}
    \label{eq:beta_constants}
    c(s)\coloneqq\begin{cases}
        \frac{1}{s}, & s>0,
        \\
        1,& s=0.
    \end{cases}
\end{equation}
The case $s>0$ can be computed by writing
\[
p\cdot B\left(p,\frac{1}{s}+1\right) = p\int_0^{1}(1+t)^\frac{1}{s}t^{p-1}\, \dlat t = \frac{1}{s}\int_0^{1}(1+t)^{\frac{1}{s}-1}t^p\, \dlat t
\]
and using Lemma~\ref{l:fractional_deriv} with $\varphi(t)=(1+t)^{\frac{1}{s}-1}$.

We also need a Ball body corresponding to $p=\infty$, which we establish in the next proposition.
\begin{proposition}
\label{p_infty_body}
    Let $g:\R^n\longrightarrow\R_+$ be a function that is log-concave on its support and obtains its maximum at the origin. Then,
    \[
    K_\infty(g) := \lim_{p\to\infty} K_p(g)= \supp(g),
    \]
    which is a potentially unbounded convex set.
\end{proposition}
\begin{proof}
    It suffices to show that $\lim_{p\to\infty}\rho_{K_p(g)}=\rho_{\supp(g)}$ point-wise on the sphere. We denote by $\theta\R_+$ the ray emanating from the origin towards $\infty$ in a given direction $\theta\in\s$. Consider the case when $\supp(g)\cap \theta\R_+$ is a line segment. Define the function $\psi(r)=\frac{g(r\theta)}{\|g\|_\infty}$, which is a compactly supported, log-concave function such that $\|\psi\|_\infty=1$. Then, by \eqref{eq:mellin_limit}
    \begin{align*}
\rho_{K_\infty(g)}(\theta)&=\lim_{p\to\infty} \rho_{K_p(g)}(\theta)=\lim_{p\to\infty} \left(p\Mel{\psi}{p}\right)^\frac{1}{p}
\\
&=\vol_1\left(\supp(g)\cap \theta\R_+\right)=\rho_{\supp(g)}(\theta).
\end{align*}
On the other hand, consider the case when $\supp(g)\cap \theta\R_+=\theta\R_+$. Then, letting $\psi(r)=g(r\theta)$, we have $\supp(\psi)=\R_+$; Proposition~\ref{p:p_to_infty} therefore yields
\[
\lim_{p\to\infty} \rho_{K_p(g)}(\theta) = \infty=\rho_{\supp(g)}(\theta).
\]
\end{proof}

Nevertheless, by  Proposition~\ref{p:p_to_infty}, we have 
\begin{equation}\lim_{p\to\infty} \Gamma(1+p)^{-\frac{1}{p}}\rho_{K_p(g)}(\theta)< \infty, \quad \forall \; \theta\in\s.
\label{eq:adjusted_radial_limit}
\end{equation}
Therefore,
\begin{equation}\lim_{p\to\infty} \Gamma(1+p)^{-\frac{1}{p}}K_p(g) 
\label{eq:adjusted_limit_sets}
\end{equation} 
will converge in the Hausdorff metric (compare \cite[Section 1.8]{Sh1}) to a compact, convex set. However, since the limit \eqref{eq:adjusted_radial_limit} pertaining to the radial function may be zero, the corresponding limit \eqref{eq:adjusted_limit_sets} of sets may converge to a singleton containing the origin.

With the necessary preparations completed, we have the following theorem, which is an immediate consequence of Propositions~\ref{p:increasing_mono} and \ref{p:mellin_berwald}. This theorem is well-known for $p,s \geq 0, p\neq \infty,$ (see e.g. \cite[Proposition 2.5.7]{BGVV14} and \cite[Lemma 1]{FMY17}). To the extent of our knowledge, this result is new for the choices $p\in (-1,0)$ and $p=\infty$.
\begin{theorem}
\label{t:ball_sets}
    Fix $s\geq 0$. Let $g:\R^n\longrightarrow\R_+$ be an integrable, $s$-concave function, obtaining its maximum at the origin. Then,
    \begin{equation}
    \label{eq:weak_chain}
    K_p(g) \subseteq K_q(g), \quad -1<p<q\leq\infty,
\end{equation}
with equality if and only if there exists a bounded, non-negative, function $a:\s\longrightarrow \R_+$ such that
\begin{enumerate}
    \item if $q>p>0$: $g(r\theta)=\|g\|_\infty\chi_{[0,a(\theta)]}(r)$, and,
    \item if $p<q<0$: $g(r\theta)=\|g\|_\infty\left(1-\chi_{[0,a(\theta)]}(1/r)\right)$.
\end{enumerate}
\vskip 1mm
Similarly, for $-1<p<q<\infty$, it holds
\begin{equation}
\label{eq:strong_chain}
\begin{split}
\lim_{p\to \infty}\left(\binom{p+\frac{1}{s}}{p}^{\frac{1}{p}}K_p(g)\right)&\subseteq\!\binom{q+\frac{1}{s}}{q}^{\frac{1}{q}}K_q(g) 
\\
&\subseteq \!\binom{p+\frac{1}{s}}{p}^{\frac{1}{p}}K_p(g) 
\!\subseteq \!c(s)\tilde K_{-1}(g),
\end{split}
\end{equation}
    with equality in any set inclusion if and only if there exists a bounded, non-negative, function $\alpha:\s\longrightarrow \R_+$ such that \begin{enumerate}
        \item if $s>0$: $g(r\theta)=\|g\|_\infty(1-\alpha(\theta)r)_+^\frac{1}{s}$, and
        \item if $s=0$: $g(r\theta)=\|g\|_\infty e^{-\alpha(\theta)r}\chi_{(0,\infty)}(r).$
    \end{enumerate}
\end{theorem}
\begin{proof}
The proof of the set-inclusions \eqref{eq:weak_chain} is an immediate application of Proposition~\ref{p:increasing_mono} with $\psi(r)=g(r\theta)$. 

As for the set-inclusions \eqref{eq:strong_chain}, we make the same choice of $\psi$ in Proposition~\ref{p:mellin_berwald} and obtain that the function
\begin{align*}
G(p)&= \left(\binom{p+\frac{1}{s}}{p}\left(p\Mel{\widetilde{g(\cdot\theta)}}{p}\right)\right)^\frac{1}{p} =\binom{p+\frac{1}{s}}{p}^{\frac{1}{p}} \rho_{K_p(g)}(\theta)
\\
&=\left(\frac{1}{1+p}\binom{p+\frac{1}{s}}{p}\right)^{\frac{1}{p}} (1+p)^\frac{1}{p}\cdot\rho_{K_p(g)}(\theta)
\end{align*}
is decreasing on $(-1,\infty)$,
and then the set-inclusions and equality characterization follow; the limiting behavior as $p\to -1$ is precisely from Proposition~\ref{p:limit_body} and the definition of the constant $c(s)$ from \eqref{eq:beta_constants}. Finally, Proposition~\ref{p:p_to_infty} and the discussion beforehand yield the case of $p\to \infty$.
\end{proof}

Theorem~\ref{t:ball_sets} provides new proofs of \eqref{eq:chain_og} and \eqref{e:set_inclusion}. Again applying Theorem~\ref{t:ball_sets}, we will generalize these inequalities to the setting of log-concave functions for an alternative proof of Theorem \ref{t:Zhang_fun_hi}. 

\section{The $m$th-Order, Functional Setting: \\ Zhang's Projection inequality}
\label{sec:hi}
We now establish the functional, $m$th-order Zhang's projection inequality, which we recall.

\begin{reptheorem}{t:Zhang_fun_hi}
    Fix $n,\hid\in\N.$ Let $f\in \lc$. Then, 
    \begin{equation}\frac{1}{(n\hid)!}\int_{\R^{n\hid}}g_{f,\hid}(\bar x)\,\dlat\bar x\leq \|f\|_1^{n\hid+1}\vol_{n\hid}(\PP \langle f \rangle).
\label{eq:Zhang_fun_hi}
\end{equation}
There is equality if and only if $f(x)=\|f\|_\infty e^{-\|x-x^\prime\|_{\Delta_n}}$ for an $n$-dimensional simplex $\Delta_n$ containing the origin and some $x^\prime\in\R^n$.
\end{reptheorem}
\subsection{The Polar Projection Body of a Function}
It will be convenient to represent $\PP \langle f \rangle$ in terms of $\PP\{f \geq t\}$. Note that, by definition, we have
\begin{equation}
    \|\bar x\|_{\PP \langle f \rangle} = \int_{\partial \langle f \rangle}h_{C_{-\bar x}}(n_{\langle f \rangle}(y))\,\dlat \mathcal{H}^{n-1}(y).
    \label{eq:PPLYZ}
\end{equation}
Hence, we infer $$\|\bar x\|_{\PP \langle e^{-\|x\|_K} \rangle}=(n-1)!\|x\|_{\PP K} \longrightarrow \PP \langle e^{-\|x\|_K} \rangle = \frac{1}{(n-1)!} \PP K.$$
In the general case,  we use \eqref{eq:PPLYZ} and the change of variables formula for the asymmetric LYZ body of $f\in \lc$, \eqref{eq:change_of_variable}, to obtain the formula \eqref{eq:hi_LYZ_BV}. Next, we apply the co-area formula \eqref{eq:coarea} to deduce
\begin{equation}
\label{eq:gauage_relate}
\begin{split}
    \|\bar x\|_{\PP \langle f \rangle} &=\int_0^{\|f\|_\infty}\int_{\partial \{f\geq t\}}h_{C_{-\bar x}}\left( n_{\{f \geq t\}}(y)\right)\,\dlat \mathcal{H}^{n-1}(y)\,\dlat t    \\
    &=\int_0^{\|f\|_\infty}\|\bar x\|_{\PP \{f\geq t\}}\,\dlat t.
\end{split}
\end{equation}

Before getting into the proof of Theorem \ref{t:Zhang_fun_hi}, let us point out some crucial properties of the functional covariogram.

\subsection{The Functional Covariogram}
\label{sec:fun_covario}
Let $f \in \lc$. Then, the functional covariogram of $f$ satisfies
\begin{equation}
\begin{split}
    g_{f,\hid}(\bar x)&=\int_{\R^n}\min_{0\leq i \leq m}\{f(y-x_i)\}\,\dlat y
     =\int_{\R^n}\int_{0}^{\min_{0\leq i \leq m}\{f(y-x_i)\}} \,\dlat t\,\dlat y 
    \\
    &=\int_0^{\infty}\int_{\R^n}\chi_{\{y:\min_{0\leq i \leq m}\{f(y-x_i)\} \geq t\}}(y)\dlat y \dlat t
    \\
    &=\int_0^{\infty}\int_{\R^n}\chi_{\{y: y-x_i\in\{f \geq t\}\}}(y)\dlat y \dlat t
     \\
    &=\int_0^{\infty}\int_{\R^n}\chi_{\{y\in \cap_{0\leq i \leq m}\left(\{f \geq t\}+x_i\right)\}}(y)\dlat y \dlat t =\int_0^\infty g_{\{f \geq t\},\hid}(\bar x) \,\dlat t.
\end{split}
\label{eq:covario_relate}
\end{equation}
That is, the functional covariogram can be described as the integration of the covariograms of the level sets of $f$.

\begin{remark}
If one were to apply the $m$th-order Zhang's projection inequality to the level sets of $f$ and integrate, they would obtain from \eqref{eq:covario_relate}

\begin{equation*}\frac{1}{(n\hid)!}\int_{\R^{n\hid}}g_{f,\hid}(\bar x)\,\dlat \bar x\leq \int_0^{\|f\|_\infty} \vol_{n\hid}(\PP \{f\geq t \}) \,\dlat t.
\end{equation*}

But notice that the above Zhang's projection-type inequality is weaker than an inequality involving the asymmetric LYZ body from Theorem~\ref{t:Zhang_fun_hi}. Indeed, by Jensen's inequality and \eqref{eq:gauage_relate}
\begin{align*}
    \|f\|_\infty^{n\hid}\|\bar x\|_{\PP \langle f \rangle}^{-n\hid} 
    &=\left(\int_0^{\|f\|_\infty}\|\bar x\|_{\PP \{f\geq t\}}\frac{\,\dlat t}{\|f\|_\infty}\right)^{-n\hid}
    \\
    & \leq \int_0^{\|f\|_\infty}\|\bar x\|^{-n\hid}_{\PP \{f\geq t\}}\frac{\,\dlat t}{\|f\|_\infty}.
\end{align*}
Integrating over the sphere, we deduce $$\|f\|_\infty^{n\hid+1}\vol_{n\hid}(\PP \langle f \rangle) \leq \int_0^{\|f\|_\infty} \vol_{n\hid}(\PP \{f\geq t \}) \,\dlat t.$$
\end{remark}

 Next, we analyze in more detail the support of $g_{f,\hid}$.
\begin{proof}[Proof of Proposition~\ref{p:support}]
    For the first claim, observe that $g_{f\hid}(\bar o) = \|f\|_{1}> 0$. Therefore, the assertion follows by continuity. We now turn our attention to the equality of sets. The first equality is an application of Proposition~\ref{p_infty_body} with $g=g_{f,\hid}$. Next, fix $y_0\in \R^n$ and suppose that $\supp(f)=\R^n$. Then, for a fixed $\bar{x}\in \R^{n\hid}$, We can find an Euclidean ball $B$ such that $y_0-x_i\in B$ for all $i=1,\dots,m$. Furthermore, on this ball, $f \geq a$ for some $a>0$. Therefore,
    \begin{align*}
         g_{f,\hid}(\bar x)&=\int_{\R^n}\min_{0\leq i \leq m}\{f(y-x_i)\}\,\dlat y 
         \\
         &\geq \int_{B}\min_{0\leq i \leq m}\{f(y-x_i)\}\,\dlat y 
         \geq a\vol_n(B) >0.
    \end{align*}
    From the arbitrariness of $\bar x$, this shows $\supp(g_{f,\hid})=\R^{nm}$.
    
    For the final claim, we use \eqref{eq:covario_relate}. We have that $g_{f,\hid}(\bar x) >0$ if and only if $g_{\{f \geq t\},\hid}(\bar x) >0$ on a set of positive measure $T\subset \R_+$ (in the variable $t$). This is true if and only if there exists $z$, depending on $t$,
    \begin{equation}
    \label{eq:f_sets}
    z\in \{f \geq t\}\cap\bigcap_{i=1}^m\left(\{f \geq t\} + x_i\right)
    \end{equation}
    for all such $t$, but this means each $z$ belongs to 
    \[
    \supp(f)\cap\bigcap_{i=1}^m\left(\supp(f) + x_i\right),
    \]
    i.e. $\bar x \in D^m(\supp(f)).$ Conversely, if $z\in \text{int}(D^m(K))$, then there exists a range of $t$ such that \eqref{eq:f_sets} holds, yielding $g_{\{f\geq t\},\hid}(\bar x) >0$ on a set of positive measure.
\end{proof}

Next, we take the radial derivative of the functional covariogram. First, we need the following lemma. Recall that, for a function $h$ on the sphere $\s$, the Wulff shape of $h$ is defined by $$[h]=\{x\in\R^n: \max_{u\in\s} \langle x,u\rangle \leq h(u)\}.$$
\begin{lemma}\label{lemma:leibniz}
    Let $f\in \lc$ and let $\bar{\theta} \in \S$. Then, for $\varepsilon>0$ suitably small,
    \begin{itemize}
        \item[(a)] For every $r \in [0,\varepsilon]$, $g_{\{f \geq t\},m}(r\bar{\theta})$ is integrable in $t \in [0, \| f\|_{\infty}]$,
        \item[(b)] The partial derivative \[\frac{\partial }{\partial r}g_{\{f \geq t\},m}(r \bar{\theta})\] exists for every $r \in [0,\varepsilon]$ and $t \in [0, \| f\|_{\infty})$,
        \item[(c)] There exists $c(t)$ integrable on $[0, \| f\|_{\infty}]$ such that \[ g_{\{f \geq t\},m}(r\bar{\theta})\leq c(t)\] for every $r \in [0,\varepsilon]$ and $t \in [0, \| f\|_{\infty}]$.
    \end{itemize}
\end{lemma}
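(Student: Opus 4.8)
The plan is to verify the three hypotheses needed to apply a Leibniz-type differentiation-under-the-integral rule to $g_{f,\hid}(r\bar\theta) = \int_0^{\|f\|_\infty} g_{\{f\geq t\},\hid}(r\bar\theta)\,dt$, using the structural facts about the higher-order covariogram of a convex body already recorded in the excerpt. The key observation is that for each fixed $t\in(0,\|f\|_\infty)$ the superlevel set $\{f\geq t\}$ is a convex body (by log-concavity and integrability), so $g_{\{f\geq t\},\hid}$ is the \emph{geometric} higher-order covariogram $\Cov[{\{f\geq t\}}]$ of that body, and I can invoke the properties from Section~\ref{sec:prelim}: it is supported on $D^\hid(\{f\geq t\})$, its $n$th root is concave (hence it is continuous and one-sided differentiable along rays), and its radial derivative at the origin equals $-\|\bar\theta\|_{\PP\{f\geq t\}}$ by \eqref{eq:cov_deriv}.

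For (b), fix $t$ with $0<t<\|f\|_\infty$ so that $\{f\geq t\}$ has nonempty interior; then $\bar o$ lies in the interior of $D^\hid(\{f\geq t\})$, and since $\Cov[{\{f\geq t\}}]^{1/n}$ is concave on $D^\hid(\{f\geq t\})$, the function $r\mapsto g_{\{f\geq t\},\hid}(r\bar\theta)^{1/n}$ is concave on an interval $[0,\delta(t)]$ with $\delta(t)>0$, hence differentiable except at countably many $r$; but one in fact wants differentiability for \emph{every} $r\in[0,\varepsilon]$. Here I would exploit that the covariogram of a convex body is a \emph{polynomial} expression along rays in a neighbourhood of the origin — more precisely, near $\bar o$ the set $\{f\geq t\}\cap\bigcap_i(r\theta_i+\{f\geq t\})$ varies smoothly enough that $r\mapsto g_{\{f\geq t\},\hid}(r\bar\theta)$ is piecewise analytic; alternatively, and more robustly, I would note that concavity of $g^{1/n}$ forces the one-sided derivatives to exist everywhere and the statement (b) should be read (as in the cited \cite[Lemma 4]{HL22}-style arguments) as the existence of the right derivative, which concavity guarantees for all $r$ in the interior and at $0$. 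The uniform choice of $\varepsilon$ comes from the fact that $f$ is integrable and log-concave: there is a single $\varepsilon>0$ (depending only on $\bar\theta$ and the geometry of $\{f\geq t_0\}$ for $t_0$ close to $0$, since these level sets are nested and shrink as $t$ grows) such that $r\bar\theta$ stays in the interior of every $D^\hid(\{f\geq t\})$ that is non-degenerate — and for $t$ near $\|f\|_\infty$ the level set is tiny but still $r\bar\theta\in D^\hid(\{f\geq t\})$ can fail, which is exactly why the claim restricts $r$ and allows $g=0$ there (and $0$ is trivially differentiable).

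For (a) and (c), the domination and integrability both follow from the pointwise bound $g_{\{f\geq t\},\hid}(r\bar\theta)\le g_{\{f\geq t\},\hid}(\bar o)=\vol_n(\{f\geq t\})$, which is valid because adding intersection constraints only decreases the measure; thus I set $c(t):=\vol_n(\{f\geq t\})$, and $\int_0^{\|f\|_\infty}\vol_n(\{f\geq t\})\,dt=\|f\|_{L^1(\R^n)}<\infty$ by the layer-cake formula, using integrability of $f$. This same bound gives (a) since a nonnegative function dominated by an integrable function is integrable. The main obstacle is (b): getting differentiability \emph{for every} $r\in[0,\varepsilon]$ rather than almost every $r$, and doing so with $\varepsilon$ independent of $t$; I expect the cleanest route is to combine the concavity of $g_{\{f\geq t\},\hid}^{1/n}$ (which yields existence of right derivatives everywhere and monotonicity of difference quotients) with a uniform lower bound on the "inradius" of $D^\hid(\{f\geq t\})$ in the direction $\bar\theta$ for $t$ in a fixed small interval $[0,t^\ast]$, handling $t\in[t^\ast,\|f\|_\infty]$ separately by shrinking $\varepsilon$ further or observing the relevant derivative is a one-sided derivative of a concave function and hence always exists.
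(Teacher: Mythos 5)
Your treatment of (a) and (c) matches the paper: dominate $g_{\{f\geq t\},\hid}(r\bar\theta)$ by $c(t)=\vol_n(\{f\geq t\})$, which is integrable in $t$ by the layer-cake formula since $f\in L^1$. That part is correct.

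For (b), however, there is a genuine gap. You correctly observe that concavity of $g_{\{f\geq t\},\hid}(r\bar\theta)^{1/n}$ in $r$ only yields existence of one-sided derivatives (equivalently, differentiability off a countable set), and your two escape routes are both insufficient: the ``piecewise analytic along rays'' claim is not established (and is not actually what is needed), and reinterpreting (b) as a statement about one-sided derivatives would weaken the lemma relative to what is asserted and used. The paper's proof uses a completely different mechanism that you do not have. It writes the higher-order covariogram as the volume of a Wulff shape, $g_{K,\hid}(r\bar\theta)=\vol_n\big([h_K-r\,h_{C_{-\bar\theta}}]\big)$, then invokes the semigroup property of Wulff shapes together with Aleksandrov's variational lemma to conclude that the \emph{right} derivative $\frac{d}{dr}g_{\{f\geq t\},\hid}(r\bar\theta)$ exists and is right-continuous in $r$ on $[0,\varepsilon]$. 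Since the function itself is continuous in $r$ and has a continuous right derivative, a standard real-analysis fact (the paper cites Bruckner) upgrades this to genuine two-sided differentiability on $[0,\varepsilon]$. This step — getting continuity of the right derivative via the Wulff/Aleksandrov machinery and then concluding full differentiability — is the missing idea in your proposal; without it, concavity alone cannot rule out isolated corners. Incidentally, the lengthy discussion you give about uniformity of $\varepsilon$ in $t$ is not the crux here: the paper simply applies the ray-differentiability argument levelwise, using that each $\{f\geq t\}$ with $t<\|f\|_\infty$ has nonempty interior.
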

\begin{proof}
    Notice that $(a)$ follows from \eqref{eq:covario_relate} and the fact that $f$ is integrable. Point $(c)$ follows choosing $c(t)=\vol_n (\{ f\geq t\})$. 

    We now focus on point $(b)$, which is slightly more delicate. Notice that, for a convex body $K$, we can write (cf. \cite[Proposition 3.2]{HLPRY25}) \[ \Cov(r\bar{\theta})=\vol_n\left( [h_K-rh_{C_{-\bar{\theta}}}]\right).\] By the semigroup property of the Wulff shape (compare \cite[Theorem 5.7]{Wi80}) and Aleksandrov's variational lemma \cite[Lemma 7.5.3]{Sh1}, we have for $\varepsilon>0$ sufficiently small that \[\frac{\partial}{\partial r}g_{\{f \geq t\},m}(r \bar{\theta})\] is right-continuous in $r$ on the interval $[0,\varepsilon]$. Since $\Cov(r\bar\theta)$ is continuous for $r\geq 0$ and has continuous right derivative on $[0,\varepsilon]$, it is differentiable on $[0,\varepsilon]$ (see, for example, \cite[Theorem 1.3, page 40]{Bruckner_Differentiation}). Notice that for every $t \in [0, \| f\|_{\infty})$ the set $\{f \geq t\}$ has non-empty interior so that we can apply the previous procedure to every level set (except, at most, the one corresponding to the maximum). This concludes the proof of $(b)$.
\end{proof}
As a consequence of Lemma \ref{lemma:leibniz}, we extend Matheron's identity \eqref{e:matheron} to the functional setting. We remark that the following result is, to the best of our knowledge, new even in the case of $m =1$.
\begin{theorem}
\label{t:co_fun_hi}
    Fix $m,n\in\N$. Let $f\in \lc$. Then, for every $\bar\theta\in \S$,
    $$\frac{\partial }{\partial r}g_{f,\hid}(r\bar\theta)\big|_{r=0^+} = -\|\bar \theta\|_{\PP \langle f \rangle}.$$
\end{theorem}
\begin{proof}
    Observe that \eqref{eq:covario_relate} and \eqref{eq:gauage_relate} yield
\begin{align*}
    \frac{\partial }{\partial r}g_{f,\hid}(r\bar\theta)\big|_{r=0^+}&=\int_{0}^\infty \frac{\partial}{\partial r}g_{\{f\geq t\},m}(r\bar\theta)\big|_{r=0+}\,\dlat t
    \\
    &=-\int_{0}^\infty\|\bar x\|_{\PP \{f\geq t\}}\,\dlat t=-\|\bar \theta\|_{\PP \langle f \rangle},
\end{align*}
where we could interchange the integral and derivative by Lemma \ref{lemma:leibniz}.
\end{proof}
As a corollary, we obtain the following "tangent-line bound" for the functional covariogram.
\begin{corollary}\label{c:log_cov}
    Fix $n,m\in\N$. Let $f\in \lc$. Then, $g_{f,\hid}$ is log-concave and, for a fixed $\bar x \in \R^{n\hid}$, 
    \begin{equation} g_{f,\hid}(\bar x) \leq \|f\|_1e^{-\frac{\|\bar x\|_{\PP \langle f \rangle}}{\|f\|_1}},
\label{eq:log_cov}
\end{equation}
with equality if and only if the function $r\mapsto g_{f,\hid}(r\bar x)$ is $\log$-affine on its support.
\end{corollary}
\begin{proof}
    First, observe that $g_{f,\hid}$ is log-concave. Indeed, the minimum of log-concave functions is log-concave (since the maximum of convex functions is convex). Then, from the Pr\'ekopa-Leindler \eqref{eq:PL} inequality applied in $\R^{n\hid}$, we obtain the log-concavity of $g_{f,\hid}$. 

    Next, as $r\mapsto g_{f,\hid}(r\bar\theta)$ is log-concave, we obtain from Theorem~\ref{t:co_fun_hi} that, for every $r>0$ such that $r\bar\theta$ is in the support of $g_{f,\hid}$,
\begin{align*}-\frac{\|\bar \theta\|_{\PP \langle f \rangle}}{\|f\|_1}&=\frac{\partial}{\partial r}\log g_{f,\hid}(r\bar\theta)\Big|_{r=0^+} 
\\
&\geq \frac{\log g_{f,\hid}(r\bar\theta) - \log\|f\|_1}{r}=\frac{\log\left(\frac{g_{f,\hid}(r\bar\theta)}{\|f\|_1}\right)}{r}.\end{align*}
Consequently, we obtain
$$r\|\bar\theta\|_{\PP \langle f \rangle} \leq -\|f\|_1\log\left(\frac{g_{f,\hid}(r\bar\theta)}{\|f\|_1}\right).$$

This is precisely our claim after exponentiation.
\end{proof}
\noindent In the following lemma, we show that equality occurs in Corollary~\ref{c:log_cov} if and only if $f$ is a multiple of $e^{-\|x-x^\prime\|_{\Delta_n}}$ for some $x^\prime \in \R^n$ and $n$-dimensional simplex $\Delta_n$ containing the origin.
\begin{lemma}
\label{l:equality_cond}
    Suppose $f\in \lc$ is such that, for almost all $\bar\theta\in\mathbb{S}^{nm-1}$, the function $r\mapsto g_{f,\hid}(r\bar\theta)$ is $\log$-affine. Then, $f(x)=\|f\|_\infty e^{-\|x-x^\prime\|_{\Delta_n}}$ for an $n$-dimensional simplex $\Delta_n$ containing the origin and some $x^\prime \in \R^n$.
\end{lemma}
\begin{proof}
Notice that one may verify via direct substitution that, if $f(x)=\|f\|_\infty e^{-\|x-x^\prime\|_{\Delta_n}}$ for an $n$-dimensional simplex $\Delta_n$ containing the origin and some $x^\prime \in \R^n$, then $r\mapsto g_{f,\hid}(r\bar \theta)$ is $\log$-affine. It remains to show the converse direction.

Since $g_{f,\hid}$ is invariant under translations of $f$, we may assume that $\|f\|_\infty=f(o)$. For $x\in\R^n$, let $\tilde x\in\R^{n\hid}$ be $\tilde x=(x,o,\dots,o)$. Then, $g_{f,\hid}(\tilde x) = g_{f}(x)$. It is precisely the content of \cite[Lemma 2.4.2]{ABG20} that, if $f(o)=\|f\|_\infty$ and $r\mapsto g_{f}(r\theta)$ is $\log$-affine on its support for almost all $\theta\in\s$, then $f(x)=\|f\|_\infty e^{-\|x\|_{\Delta_n}}$ for some $n$-dimensional simplex $\Delta_n$ containing the origin.
\end{proof}

\label{sec:radial}
We finish by showing how Corollary \ref{c:log_cov} reduces Theorem \ref{t:Zhang_fun_hi} to the following simple proof. 
\begin{proof}[Proof of Theorem~\ref{t:Zhang_fun_hi}]
Observe that, integrating both sides of \eqref{eq:log_cov} over $\R^{n\hid}$ yields
\begin{align*}\frac{1}{(n\hid)!}\int_{\R^{n\hid}}g_{f,\hid}(\bar x)\,\dlat \bar x &\leq \|f\|_1\frac{1}{(n\hid)!}\int_{\R^{n\hid}}e^{-\frac{\|\bar x\|_{\PP \langle f \rangle}}{\|f\|_1}}\,\dlat \bar x
\\
&=\|f\|_1^{n\hid+1}\frac{1}{(n\hid)!}\int_{\R^{n\hid}}e^{-\|\bar x\|_{\PP \langle f \rangle}}\,\dlat\bar x
\\
&=\|f\|_1^{n\hid+1}\vol_{n\hid}(\PP \langle f \rangle).
\end{align*}
with equality if and only if $g_{f,\hid}$ is $\log$-affine. Equality conditions then follow from Lemma~\ref{l:equality_cond}.
\end{proof}

\subsection{The approach by radial mean bodies}
\label{sec:radial_function_bodies}
As we anticipated above, our second main goal is to explore functional analogues of radial mean bodies. To the best of our knowledge, such analogues have not been previously studied, even in the case $m=1$. In this regard, we employ the framework established in Section \ref{sec:class}. We start by introducing the definition of this family of sets.
\begin{definition}
    Fix $m,n\in\N$ and let $f \in \lc$. The $m$th-order, functional radial $p$th mean body of $f$ is the star body $R^\hid_p f \subset \R^{n\hid}$ given by the radial function: for $\bar\theta\in\S,$
\begin{equation}\rho_{R^\hid_p f}(\bar\theta)\!=\!\begin{cases}
    \left(\frac{p}{\|f\|_1}\int_0^{\infty}\!g_{f,\hid}(r\bar\theta)r^{p-1}\,\dlat r\right)^\frac{1}{p}, &\! p>0,
    \\
    \left(\frac{p}{\|f\|_1}\!\int_{0}^{\infty}\! \left(g_{f,\hid}(r\bar\theta)\!-\!\|f\|_1\right) \!r^{p-1} \,\dlat r\right)^\frac{1}{p}, & \!p\in (-1,0).
\end{cases}
\label{eq:radial_fun_hi}
\end{equation}
\end{definition}
Clearly, the definition is well-posed, since the restriction of a summable log-concave function on $\R^n$ to a line is a summable one-dimensional log-concave function. Define the function $f_{y,m}:\R^{nm}\longrightarrow \R_+$ by
$$f_{y,m}(\bar x) = \min_{0\leq i \leq m}\left\{f(y-x_i)\right\},$$
where $y\in\R^n$ and $f:\R^n\longrightarrow\R_+$ are fixed. Then, for $p\neq 0$,
\[
\rho_{R^\hid_p f}(\bar\theta) = \left(\frac{\|f\|_\infty}{\|f\|_1}\int_{\R^n}\rho_{K_p(f_{y,m})}^p(\bar\theta)\,\dlat y\right)^\frac{1}{p},
\]
and then $p=0$ follows by continuity. This should be compared to \eqref{eq:radial_mean_og}. Notice that we can apply integration by parts to \eqref{eq:radial_fun_hi} to obtain
\begin{equation}
\label{eq:radial_uni}
    \rho_{R^\hid_p f}(\bar\theta)\!=\!\begin{cases} \left(\int_0^\infty\left(-\frac{\partial}{\partial r}\frac{g_{f,\hid}(r\bar\theta)}{\|f\|_1}\right) r^p \,\dlat r\right)^\frac{1}{p}, & p\!\in\!(-1,0)\!\cup\!(0,\infty),
    \\
    \exp\left(\int_0^\infty\!\left(\!-\frac{\partial}{\partial r}\!\frac{g_{f,\hid}(r\bar\theta)}{\|f\|_1}\right) \log(r)\,\dlat r\right), & p=0,
    \end{cases}
\end{equation}
where we used Lebesgue's theorem to obtain that $g_{f,\hid}(r\bar \theta)$ is differentiable almost everywhere on its support (in $r$) as it is monotonically decreasing in the variable $r.$ The formula for $p=0$ follows by continuity. 

Once \eqref{eq:radial_uni} is established, Theorem \ref{t:radial_hi_fun_set} is merely an application of Theorem~\ref{t:ball_sets} with $g=g_{f,\hid},$ and $s=0$. For completeness, we present a proof from base principles. 
\begin{proof}[Proof of Theorem~\ref{t:radial_hi_fun_set}]
   Notice that for a fixed $\bar\theta$ the function \[r\mapsto \|f\|_1^{-1} g_{f,\hid}(r\bar\theta)\] is non-increasing. Furthermore, we recall that $g_{f,\hid}(r\bar\theta)$ is $\log$-concave and thus, from Proposition~\ref{p:mb}, the function
    $$p\mapsto \left(\frac{1}{\Gamma(p+1)}\right)^\frac{1}{p} \rho_{R^\hid_p f}(\bar\theta)$$ is non-decreasing, and is constant if and only if for a fixed $\bar\theta$ \begin{equation} g_{f,\hid}(r\bar\theta) = \|f\|_1e^{-cr}
    \label{eq:equal}
    \end{equation}
    for some $c>0$, establishing our claim. 
    
    We now discuss the limiting behaviour in $p$ (and determine $c$). To determine the behaviour as $p\to -1$, observe we can write
$$\frac{1}{\Gamma(p+1)}\rho_{R^\hid_p f}(\bar\theta)^p = \frac{1}{\Gamma(p+2)}[(p+1)\rho_{R^\hid_p f}(\bar\theta)^p].$$
We then obtain from \eqref{eq:radial_uni} and Lemma~\ref{l:fractional_deriv} that
\begin{align*}\lim_{p\to -1} \frac{1}{\Gamma(p+2)}[(p+1)\rho_{R^\hid_p f}(\bar\theta)^p] &= -\frac{1}{\|f\|_1}\frac{\partial}{\partial r}g_{f,\hid}(r\bar\theta)\big|_{r=0^+} 
\\
&= \frac{\|\bar \theta\|_{\PP \langle f \rangle}}{\|f\|_1}.
\end{align*}

Now, again, suppose there is equality. In this case, when determining the behaviour as $p\to -1$, calculate the derivative of $g_{f,\hid}$ using \eqref{eq:equal} to conclude
\[c= \frac{\|\bar \theta\|_{\PP \langle f \rangle}}{\|f\|_1}.\]
The equivalent form of the equality conditions follows from Lemma~\ref{l:equality_cond}.
\end{proof}

As a result of Theorem \ref{t:radial_hi_fun_set}, we present the following alternative approach to Theorem \ref{t:Zhang_fun_hi}.
\begin{proof}[Alternative proof of Theorem~\ref{t:Zhang_fun_hi}]
    By integrating in polar coordinates, we obtain that
\begin{align*}\frac{1}{\|f\|_1}&\int_{\R^{n\hid}}g_{f,\hid}(\bar x)\,\dlat \bar x=\frac{1}{\|f\|_1}\int_{\S}\!\int_0^{\infty}\!g_{f,\hid}(r\bar \theta)r^{n\hid-1}\,\dlat r\,\dlat \bar\theta
\\
&=\frac{1}{n\hid}\int_{\S}\rho_{R^\hid_{n\hid} f}(\bar\theta)^{n\hid}\,\dlat \bar\theta=\vol_{n\hid}(R^\hid_{n\hid} f).
\end{align*}
Consequently, Theorem~\ref{t:Zhang_fun_hi} follows from Theorem~\ref{t:radial_hi_fun_set} by setting $p=n\hid$ and considering the volume of both sides.
\end{proof}

We finish this section by characterizing the radial mean bodies of a particular class of functions which contains those of Corollary \ref{cor:zero}. Recall that $R^m_p K=K_p (g_{K,m})$, i.e. its radial function is similar to \eqref{eq:radial_fun_hi} and \eqref{eq:radial_uni}, but with $\|f\|_1$ and $g_{f,m}$ replaced with $\vol_n(K)$ and  $g_{K,m}$ respectively.
\begin{theorem}
\label{t:bodies_relate}
Fix $m,n\in\N$. Let $\varphi:\R_+\longrightarrow\R_+$ be a measurable function with finite $(n-1)$th moment. Suppose further that $\varphi$ has maximum at the origin and decreases monotonically to zero at $\infty$. Fix $K\subset \R^n$ a convex body and define the function $f:\R^n\longrightarrow\R_+$ by $f=\varphi(\|\cdot\|_K)$. 

\noindent Then, if $p>0$ and $\varphi$ also has finite $(n+p-1)$th moment, one has
\[
R^\hid_p f = \left(\frac{n+p}{n}\frac{\int_{0}^{\infty}\varphi(t)t^{n+p-1}\dlat t}{\int_{0}^{\infty}\varphi(t)t^{n-1}\;\dlat t}\right)^\frac{1}{p}R_p^{\hid}K,
\]
while, if $p=0$ and $\varphi(t)t^{n-1}\log(t)$ is integrable on $\R_+$, one obtains
\[
R^\hid_0 f = e^\frac{1}{n}\exp\left(\frac{\int_0^{\infty}\varphi(t)t^{n-1}\log (t)\;\dlat t}{\int_{0}^{\infty}\varphi(t)t^{n-1}\;\dlat t}\right)R_0^{\hid}K.
\]
\end{theorem}
\begin{proof}
    First, we compute $g_{f,m}$. Since $\varphi$ is monotone decreasing, it is differentiable almost everywhere. Therefore, for every $\bar x\in \R^{nm},$ it holds 
    \begin{align*}
        g_{f,m}(\bar x) &= \int_{\R^n}\min_{0\leq i \leq m}\varphi(\|y-x_i\|_K)\;\dlat y = \int_{\R^n}\varphi\left(\max_{0\leq i \leq m}\|y-x_i\|_K\right)\;\dlat y 
        \\
        &= \int_{\R^n}\int_{\max_{0\leq i \leq m}\|y-x_i\|_K}^{\infty}(-\varphi^\prime(t))\; \dlat t \;\dlat y
        \\
        &= \int_{0}^{\infty}(-\varphi^\prime(t))\vol_{n}\left(tK\cap \bigcap_{i=1}^m\left(tK+x_i\right)\right)\; \dlat t
         \\
        &= \int_{0}^{\infty}(-\varphi^\prime(t))t^{n}\vol_{n}\left(K\cap \bigcap_{i=1}^m\left(K+t^{-1}x_i\right)\right)\; \dlat t
        \\
        &=\int_0^{\infty}(-\varphi^\prime(t))t^{n}g_{K,m}(t^{-1}\bar x) \; \dlat t.
    \end{align*}
    Next, we compute $g_{f,m}(\bar o) = \|f\|_1$:
    \begin{equation}
    \label{eq:g_f_0}
    \begin{split}
        \int_{\R^n}\varphi(\|x\|_K)\; \dlat x &= \int_{\s}\int_{0}^{\infty}\varphi(r\|\theta\|_K)r^{n-1}\, \dlat r\; \dlat \theta 
        \\
        &= \left(n\int_{0}^{\infty}\varphi(r)r^{n-1}\;\dlat r\right)\left(\frac{1}{n}\int_{\s}\|\theta\|_K^{-n}\; \dlat \theta\right)
        \\
        &= \vol_n(K)\left(n\int_{0}^{\infty}\varphi(r)r^{n-1}\;\dlat r\right).
    \end{split}
    \end{equation}
    We compute the radial function of $R^\hid_p f$ when $p>0$: for every $\bar\theta\in\mathbb{S}^{nm-1}$,
    \begin{align*}
        &\rho_{R^\hid_p f}(\bar\theta)^p=\frac{p}{\|f\|_1}\int_0^{\infty} g_{f,m}(r\bar \theta)r^{p-1}\; \dlat r
        \\
        &=\frac{p}{\|f\|_1}\int_{0}^{\infty}\int_{0}^{\infty}(-\varphi^\prime(t))t^{n}g_{K,m}(rt^{-1}\bar \theta)r^{p-1}\;\dlat t\;\dlat r
        \\
        &=\frac{p}{\|f\|_1}\int_{0}^{\infty}\int_{0}^{\infty}(-\varphi^\prime(t))t^{n+p}g_{K,m}(r\bar \theta)r^{p-1}\;\dlat t\;\dlat r
        \\
        &=\left(\frac{p}{\vol_n(K)}\int_{0}^{\infty}g_{K,m}(r\bar\theta)r^{p-1}\dlat r\right)
        \left(\frac{\vol_n(K)}{\|f\|_1}\int_{0}^{\infty}(-\varphi^\prime(t))t^{n+p}\dlat t\right)
        \\
        &=\rho_{R^{m}_p K}(\bar\theta)^p\left(\frac{\int_{0}^{\infty}(-\varphi^\prime(t))t^{n+p}\dlat t}{n\int_{0}^{\infty}\varphi(t)t^{n-1}\;\dlat t}\right).
    \end{align*}
    
    The claim follows since, by integration by parts, \[
    \int_{0}^{\infty}(-\varphi^\prime(t))t^{n+p}\dlat t = (n+p)\int_{0}^{\infty}\varphi(t)t^{n+p-1}\;\dlat t.
    \]

    For $p=0$, we first take the partial derivative of $g_{f,m}(\bar x) $: since $r\mapsto g_{K,m}(r\bar\theta)$ is $(1/n)$-concave, it is bounded from above by an $(1/n)$-affine function, specifically $g_{K,m}(r\bar\theta)\leq \vol_n(K)\left(1-\frac{r\|\bar\theta\|_{\PP K}}{n\vol_n(K)}\right)_+^n$. Therefore, we may apply the Leibniz rule and obtain, for every $r>0$ and $\bar\theta\in\S,$ 
    \begin{equation}
    \label{eq:partial_g_f_deriv}
    \frac{\partial}{\partial r}g_{f,m}(r\bar \theta) = \int_0^{\infty}(-\varphi^\prime(t))t^{n}\left(\frac{\partial}{\partial r}g_{K,m}(t^{-1}r\bar \theta)\right) \; \dlat t.
    \end{equation}
    Inserting our choice of $f$ into \eqref{eq:radial_uni} when $p=0$,  \eqref{eq:partial_g_f_deriv} and \eqref{eq:g_f_0} yield
     \begin{align*}
     &\rho_{R^\hid_p f}(\bar\theta) =\exp\left(\int_0^\infty\left(-\frac{\partial}{\partial r}\!\frac{g_{f,\hid}(r\bar\theta)}{\|f\|_1}\right) \log(r)\,\dlat r\right)
     \\
     &=\exp\left(\int_0^{\infty}\left(\frac{(-\varphi^\prime(t))t^{n}}{n\int_{0}^{\infty}\varphi(t)t^{n-1}\;\dlat t}\right)\int_0^\infty\left(-\frac{\partial}{\partial r}\frac{g_{K,m}(t^{-1}r\bar \theta)}{\vol_n(K)}\right) \log(r)\,\dlat r\dlat t\right).
     \end{align*}
     We next introduce the variable substitution $r=st$. Adjusting both the integral and derivative in $r$, we obtain, by inserting $\rho_{R_0^m K}$ and using
     \[
     \int_0^\infty\left(-\frac{\partial}{\partial s}\frac{g_{K,m}(s\bar \theta)}{\vol_n(K)}\right) \;\dlat s =1,
     \]
     the formula
     \begin{align*}
     &\rho_{R^\hid_p f}(\bar\theta)
     \!=\!\exp\left(\!\int_0^{\infty}\!\!\!\left(\frac{(-\varphi^\prime(t))t^{n}}{n\int_{0}^{\infty}\varphi(t)t^{n-1}\;\dlat t}\right)\int_0^\infty\left(-\frac{\partial}{\partial s}\frac{g_{K,m}(s\bar \theta)}{\vol_n(K)}\right) \log(st)\,\dlat s\dlat t\right)
      \\
     &=\exp\left(\frac{\int_0^{\infty}(-\varphi^\prime(t))t^{n}\log(t)\;\dlat t}{n\int_{0}^{\infty}\varphi(t)t^{n-1}\;\dlat t}\right)\exp\left( \left(\frac{\int_0^{\infty}(-\varphi^\prime(t))t^{n}\; \dlat t}{n\int_{0}^{\infty}\varphi(t)t^{n-1}\;\dlat t}\right)\log\rho_{R_0^\hid K}(\bar\theta)\right).
     \end{align*}

Finally, by integration by parts, we obtain
   \[
\rho_{R^\hid_0 f}(\bar\theta) = e^\frac{1}{n}\exp\left(\frac{\int_0^{\infty}\varphi(t)t^{n-1}\log (t)\;\dlat t}{\int_{0}^{\infty}\varphi(t)t^{n-1}\;\dlat t}\right)\rho_{R_0^\hid K}(\bar\theta).
\]
\end{proof}

A few remarks are in order. Firstly, if $f=e^{-\|\cdot\|_K},$ i.e. $\varphi(t)=e^{-t}$, we obtain $R_0^\hid f = e^{\psi(n+1)}R_0^\hid K$, and, for $p>0$
\[
R^\hid_p f = \left(\frac{\Gamma(n+p+1)}{\Gamma(n+1)}\right)^\frac{1}{p}R_p^{\hid}K.
\]
Using $\Gamma(n+1)^{-\frac{1}{p}}\to 1$ as $p\to \infty$, we have by Stirling's approximation
\begin{align*}
&\lim_{p\to \infty}\Gamma\left(1+p\right)^{-\frac{1}{p}}R^\hid_p f 
\\
&= \lim_{p\to\infty}\left(\frac{\Gamma(n+p+1)}{\Gamma(n+1)\Gamma(p+1)}\right)^\frac{1}{p}R_p^{\hid}K
= R_\infty^\hid K = D^m(K).
\end{align*}
Next, Corollary~\ref{cor:zero} then follows from Theorem~\ref{t:bodies_relate} with the choice $\varphi(t)=e^{-\frac{t^2}{2}}$ and an application of Stirling's approximation.

Finally, replicating the proof of Theorem~\ref{t:bodies_relate} when $p\in (-1,0)$ yields
\begin{align*}
        &\rho_{R^\hid_p f}(\bar\theta)
        \\
        &=\left(\!\frac{p}{\vol_n(K)}\!\int_{0}^{\infty}\!\!\!\left(\!\!\!\left(\!\frac{n+p}{n}\frac{\int_{0}^{\infty}\varphi(t)t^{n+p-1}\dlat t}{\int_{0}^{\infty}\varphi(t)t^{n-1}\;\dlat t}\right)\!g_{K,m}(r\bar \theta)-\!\!\vol_n(K)\!\!\right)r^{p-1}\dlat r\right)^\frac{1}{p}.
\end{align*}
The most direct and seemingly only way to relate this to $R^m_p K$ is to choose $\varphi(t)=Ce^{-\frac{n}{|p|}t^{|p|}}$ for some $C>0$, in which case the ratio of terms adjacent to $g_{K,m}$ is $1$. We outline this choice as a corollary of Theorem~\ref{t:bodies_relate}. We use that $C$ is a free parameter and set $C=e^{\frac{n}{|p|}}$, creating $\varphi(t)=e^{-\frac{n}{|p|}\left(t^{|p|}-1\right)}$. As $p\to 0$, this function converges to $t^{-n}$. We see that this limiting function does not satisfy the integrability requirements of the $p=0$ case of Theorem~\ref{t:bodies_relate}. Therefore, in the following corollary, we define a family of functions $f_{p,K}=\varphi\left(\|\cdot\|_K\right)$ and then arrive at $p=0$ by taking the limit as $p\to 0^-$ and as $p\to 0^+$, deriving in this way two bodies. This is because the formula for the coefficients has a jump discontinuity at $p=0$.

\begin{corollary}
      Fix $m,n\in\N$. Let $K\subset \R^n$ be a convex body and set \[
      f_{p,K}(x)=\begin{cases}
          e^{-\frac{n}{|p|}\left(\|x\|_K^{|p|}-1\right)}, & p >-1, p\neq 0,
          \\
          \|x\|_K^{-n}, & p=0.
      \end{cases}\]
      Then,
\[
R^\hid_p f_{p,K}= \begin{cases}
    R_p^\hid K, & p\in (-1,0),
    \\
    R_0^\hid K, & p=0^-,
    \\
    e^{1/n}R_0^\hid K, & p=0^+,
    \\
    \left(1+\frac{p}{n}\right)^\frac{1}{p} R_p^\hid K, & p>0.
\end{cases}
\]
\end{corollary}

\section{The Rogers-Shephard inequality in \\ the Functional, $m$th-Order Setting}
\label{sec:rs}
In order to establish the functional, $m$th-order Rogers-Shephard inequality, some definitions are in order; these definitions generalize the $\hid=1$ case in \cite{AlGMJV}. 

Recall the following notions from the introduction: given $\hid+1$ functions $f_0,\dots,f_\hid$ on $\R^{n}$, consider the vector-valued function $\bar f:\R^{n(\hid+1)}\longrightarrow\R^{\hid+1}$ as $$\bar f((x_0,\bar x)) = (f_0(x_0),f_1(x_1),\dots,f_\hid(x_\hid)).$$ Next, we define
\[
\begin{split}
    \mathcal{A}^{\hid}_{t}(\bar f)(\bar{x}) 
    &= \left\{z \in \supp(f_0) \cap_{i=1}^{m}(x_i - \supp(f_i)) \subset \R^n : \right.
    \\
    &\left. f_0(z)f_1(z - x_1) \cdot \cdot \cdot f_\hid(z - x_\hid) \geq t \Pi_{i=0}^\hid \|f_i\|_\infty \right\}.
    \end{split}
    \]
    Then, the $m$th-order convolution body of $\bar f$ is 
    $$
    \mathcal{C}^{\hid}_{\theta,t}(\bar f) = \{ \bar{x}\in\R^{n\hid} : \mathcal{A}^{\hid}_{t}(\bar f)(\bar{x}) \neq \emptyset, \vol_n(\mathcal{A}^{\hid}_{t}(\bar f)(\bar{x})) \geq \theta M_t \},
    $$
    where $M_t = \max_{\bar{x}} \vol_n(\mathcal{A}^{\hid}_{t}(\bar f)(\bar{x}))$. 
    
    In the case when we have, for all $i=0,\dots,\hid$, that $f_i =f$ for some function $f$, we write $\mathcal{A}^{\hid}_{t}(f)(\bar{x}) =  \mathcal{A}^{\hid}_{t}((f,\dots,f))(\bar{x})$. Then, we introduced the following two operations for these vector-valued maps, yielding, in turn, functions on $\R^{n\hid}$:
$$
(\bar f)_{\oplus_{\hid}} (\bar x) = \int_{\R^n} f_0(z)f_1(z - x_1) \cdot \cdot \cdot f_m(z - x_\hid) \dlat z
$$
and
$$
(\bar f)_{\star_{\hid}} (\bar{x}) = \sup_{z \in \R^n} f_0(z)f_1(z - x_1) \cdot \cdot \cdot f_m(z - x_m).
$$
We now recall the Rogers-Shephard-type inequalities from the introduction, whose proofs will take the remainder of this section. 

\begin{reptheorem}{t:hi_fun_RS}
     Fix $n,\hid\in\N$. Let $f_0,f_1,...,f_m\in \lc$. Then, setting $\bar f = (f_0,f_1,\dots,f_\hid):$
    \begin{align*}
        \|(\bar f)_{\oplus_{\hid}}\|_\infty\| (\bar f)_{\star_{\hid}}\|_{1}
        \leq \binom{n(\hid + 1)}{n} \left(\prod_{i=0}^\hid\|f_i\|_\infty\|f_i\|_1\right).
    \end{align*}
\end{reptheorem}

\begin{reptheorem}{t:hi_fun_RS_one}
    Fix $n,\hid\in\N$. Let $f\in \lc$ and set $$\bar{f}(\cdot) \coloneqq (f(\cdot),f(-\cdot),\cdots,f(-\cdot)).$$ Then:
    \begin{equation*}
       \|(\bar{f})_{\star_{\hid}}\|_1 \leq \binom{n(\hid + 1)}{n} \|f\|_\infty^m \|f\|_\frac{1}{m}.
    \end{equation*}
   There is equality if and only if $f/\|f\|_\infty$ is the characteristic function of an $n$-dimensional simplex.
\end{reptheorem}

We start by establishing two facts that will be used in the proofs. Although they are the generalization to our setting of Lemma 3.1 and Corollary 3.2 in \cite{AlGMJV}, we provide a proof for completeness. First, we show the following set inclusion.
\begin{lemma}\label{l:theta_convolution}
   Fix $m,n\in\N$. Let $f_0,\dots,f_\hid \in \lc$ and $t \in (0,1]$ be so that $M_t=\vol_n(\mathcal{A}^{\hid}_{t}(\bar f)(\bar{o}))$. Take $\theta_1, \theta_2, \lambda_1, \lambda_2 \in [0,1]$ such that $\lambda_1 + \lambda_2 \leq 1$. Then: 
   \begin{equation*}
       \lambda_1\mathcal{C}^{\hid}_{\theta_1,t}(\bar f) + \lambda_2\mathcal{C}^{\hid}_{\theta_2,t}(\bar f) \subseteq \mathcal{C}^{\hid}_{\theta,t}(\bar f),
   \end{equation*}
   with $ 1- \theta^{1/n} = \lambda_1 (1 - \theta_{1}^{1/n}) +  \lambda_2 (1 - \theta_{2}^{1/n})$.
\end{lemma}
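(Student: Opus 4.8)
The plan is to descend from the Minkowski sum of the convolution bodies to a Prékopa–Leindler-flavoured inclusion for the slices $\mathcal{A}^{\hid}_{t}(\bar g)(\bar x)$, and then apply the Brunn–Minkowski inequality \eqref{e:BM}. Abbreviate $\mathcal{A}_t(\bar x):=\mathcal{A}^{\hid}_{t}(\bar g)(\bar x)$, set $\lambda_0:=1-\lambda_1-\lambda_2\in[0,1]$ and $\bar x^{0}:=\bar o$, so $\lambda_0+\lambda_1+\lambda_2=1$. Fix $\bar x^{1}\in\mathcal{C}^{\hid}_{\theta_1,t}(\bar g)$ and $\bar x^{2}\in\mathcal{C}^{\hid}_{\theta_2,t}(\bar g)$; it is enough to show that $\bar x:=\lambda_1\bar x^{1}+\lambda_2\bar x^{2}=\lambda_0\bar x^{0}+\lambda_1\bar x^{1}+\lambda_2\bar x^{2}$ lies in $\mathcal{C}^{\hid}_{\theta,t}(\bar g)$, i.e. that $\mathcal{A}_t(\bar x)\neq\emptyset$ and $\vol_n(\mathcal{A}_t(\bar x))\geq\theta M_t$ with the asserted $\theta$. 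The crux is the set inclusion
\[
\lambda_0\,\mathcal{A}_t(\bar x^{0})+\lambda_1\,\mathcal{A}_t(\bar x^{1})+\lambda_2\,\mathcal{A}_t(\bar x^{2})\ \subseteq\ \mathcal{A}_t(\bar x).
\]

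To prove it, take $z^{j}\in\mathcal{A}_t(\bar x^{j})$ for $j=0,1,2$ and put $z:=\lambda_0 z^{0}+\lambda_1 z^{1}+\lambda_2 z^{2}$. Denoting by $x_i^{j}$ the $i$-th $\R^n$-block of $\bar x^{j}$ (so $x_0^{j}=o$), for each $i\in\{0,\dots,\hid\}$ one has $z-\sum_j\lambda_j x_i^{j}=\sum_j\lambda_j(z^{j}-x_i^{j})$, a convex combination, so the log-concavity of $g_i$ gives $g_i(z-\sum_j\lambda_j x_i^{j})\geq\prod_{j}g_i(z^{j}-x_i^{j})^{\lambda_j}$. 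Multiplying over $i=0,\dots,\hid$ and using $\prod_i g_i(z^{j}-x_i^{j})\geq t\prod_i\|g_i\|_{L^\infty(\R^n)}$ for every $j$ (which is exactly $z^{j}\in\mathcal{A}_t(\bar x^{j})$),
\[
\prod_{i=0}^{\hid}g_i\Bigl(z-\textstyle\sum_j\lambda_j x_i^{j}\Bigr)\ \geq\ t^{\lambda_0+\lambda_1+\lambda_2}\prod_{i=0}^{\hid}\|g_i\|_{L^\infty(\R^n)}\ =\ t\prod_{i=0}^{\hid}\|g_i\|_{L^\infty(\R^n)};
\]
convexity of the supports also yields $z-\sum_j\lambda_j x_i^{j}\in\supp g_i$ for all $i$, so $z\in\mathcal{A}_t(\bar x)$, proving the inclusion. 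Then \eqref{e:BM} gives
\[
\vol_n(\mathcal{A}_t(\bar x))^{1/n}\ \geq\ \lambda_0\vol_n(\mathcal{A}_t(\bar o))^{1/n}+\lambda_1\vol_n(\mathcal{A}_t(\bar x^{1}))^{1/n}+\lambda_2\vol_n(\mathcal{A}_t(\bar x^{2}))^{1/n},
\]
and since $\vol_n(\mathcal{A}_t(\bar x^{j}))\geq\theta_j M_t$ and (the key point) $\vol_n(\mathcal{A}_t(\bar o))=M_t$, this is at least $(\lambda_0+\lambda_1\theta_1^{1/n}+\lambda_2\theta_2^{1/n})M_t^{1/n}$. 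Hence $\bar x\in\mathcal{C}^{\hid}_{\theta,t}(\bar g)$ with $\theta^{1/n}=\lambda_0+\lambda_1\theta_1^{1/n}+\lambda_2\theta_2^{1/n}$, which rearranges to $1-\theta^{1/n}=\lambda_1(1-\theta_1^{1/n})+\lambda_2(1-\theta_2^{1/n})$; and $\mathcal{A}_t(\bar x)\ni z$ is nonempty.

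I expect the genuinely delicate point to be the identity $\vol_n(\mathcal{A}_t(\bar o))=M_t$. When $\lambda_1+\lambda_2=1$ the term $\lambda_0\vol_n(\mathcal{A}_t(\bar o))^{1/n}$ drops out and it is not needed, so that case is routine; but for $\lambda_1+\lambda_2<1$ one must know that the maximal slice-volume $M_t$ is attained at $\bar o$, which is not translation invariant and so must be arranged through the positioning/normalization of the $g_i$ fixed at the outset. (For instance, when $\bar g$ is reflection-symmetric — as in Theorem~\ref{t:hi_fun_RS_one} after suitably centering $f$ — one gets this quickly: $\bar x\mapsto\vol_n(\mathcal{A}_t(\bar x))$ is then even, and its $n$-th root is concave by the $\lambda_0=0$ case of the inclusion above combined with \eqref{e:BM}, so the maximum sits at $\bar o$. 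In general one may instead state the lemma with $\bar o$ replaced by a maximizer of $\bar x\mapsto\vol_n(\mathcal{A}_t(\bar x))$, which exists by the same concavity.) Finally, taking $\lambda_1=\lambda,\ \lambda_2=1-\lambda$ (resp. $\lambda_2=0$) specializes the statement to the inclusion $(1-\theta^{1/n})\mathcal{C}^{\hid}_{0,t}(\bar g)\subseteq\mathcal{C}^{\hid}_{\theta,t}(\bar g)$ and to the star-shapedness of $\mathcal{C}^{\hid}_{0,t}(\bar g)$ with respect to $\bar o$, which are the properties invoked in the proofs of Theorems~\ref{t:hi_fun_RS} and \ref{t:hi_fun_RS_one}.
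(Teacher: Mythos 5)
Your proof follows the same route as the paper: establish the pointwise set inclusion $(1-\lambda_1-\lambda_2)\,\mathcal{A}^{\hid}_{t}(\bar g)(\bar o)+\lambda_1\,\mathcal{A}^{\hid}_{t}(\bar g)(\bar x)+\lambda_2\,\mathcal{A}^{\hid}_{t}(\bar g)(\bar y)\subseteq\mathcal{A}^{\hid}_{t}(\bar g)(\lambda_1\bar x+\lambda_2\bar y)$ via log-concavity of the $g_i$, then pass to volumes with the Brunn--Minkowski inequality. Your side remark about the identity $\vol_n(\mathcal{A}^{\hid}_{t}(\bar g)(\bar o))=M_t$ is well taken: the paper's proof also uses it silently, and the translation/dilation normalization that justifies it is introduced only at the start of the proof of Theorem~\ref{t:hi_fun_RS}.
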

\begin{proof}
    Let $\bar{x} \in {C}^{\hid}_{\theta_1,t}(\bar f)$ and $\bar{y} \in {C}^{\hid}_{\theta_2,t}(\bar f)$. Consider $z_0 \in  \mathcal{A}^{\hid}_{t}(\bar f)(\bar o), z_1 \in  \mathcal{A}^{\hid}_{t}(\bar f)(\bar{x})$ and $z_2 \in  \mathcal{A}^{\hid}_{t}(\bar f)(\bar{y})$. Then, we have that
    $$
    \prod_{i = 0}^{\hid}f_i(z_0)^\hid \geq t \prod_{i = 0}^{\hid}\|f_i\|_\infty,
    $$
    $$
    f_0(z_1) \prod_{i = 1}^{\hid}f_i(z_1-x_i) \geq t \prod_{i = 0}^{\hid}\|f_i\|_\infty, \quad \text{and}
    $$
    $$
    f_0(z_2) \prod_{i = 1}^{\hid}f_i(z_2-y_i) \geq t \prod_{i = 0}^{\hid}\|f_i\|_\infty.
    $$
Moreover, since the $f_i$ are log-concave functions, we deduce that
\begin{equation*}
    \begin{split}
        &f_0\bigl((1 - \lambda_1 - \lambda_2)z_0 + \lambda_1 z_1 +  \lambda_2 z_2\bigr)
        \\
        &\quad\quad\times\prod_{i = 1}^{\hid}f_i\bigl((1 - \lambda_1 - \lambda_2)z_0 - \lambda_1 (z_1-x_i)-\lambda_2(z_2-y_i)\bigr)\\
        &\geq \left(\prod_{i = 0}^{\hid}\!\!f_i(z_0)\!\!\right)^{(1 - \lambda_1 - \lambda_2)}\left(\!f_0(z_1)\!  \prod_{i = 1}^{\hid}\!f_i(z_1-x_i)\!\!\right)^{\lambda_1} \left(\!\!f_0(z_2)\!\!\prod_{i = 1}^{\hid}f_i(z_2-y_i)\!\!\right)^{\lambda_2}\\
        &\geq t \prod_{i = 0}^{\hid}\|f_i\|_\infty.
    \end{split}
\end{equation*}
Therefore, 

\begin{align*}
 \mathcal{A}^{\hid}_{t}(\bar f)(\lambda_1\bar{x} + \lambda_2 \bar{y}) &\supseteq (1 - \lambda_1 - \lambda_2)\mathcal{A}^{\hid}_{t}(\bar f)(\bar o) 
 \\
 &\quad\quad+ \lambda_1 \mathcal{A}^{\hid}_{t}(\bar f)(\bar{x}) + \lambda_2 \mathcal{A}^{\hid}_{t}(\bar f)(\bar{y}).
 \end{align*}

Using the Brunn-Minkowski inequality \eqref{e:BM}, we then obtain 
\begin{equation*}
    \begin{split}
        \vol_n( \mathcal{A}^{\hid}_{t}(\bar f)(\lambda_1&\bar{x} + \lambda_2 \bar{y}))^{1/n} \geq (1 - \lambda_1 - \lambda_2) \vol_{n}\bigl(\mathcal{A}^{\hid}_{t}(\bar f)(\bar o)\bigr)^{1/n}\\
        &+ \lambda_1 \vol_{n}\bigl(\mathcal{A}^{\hid}_{t}(\bar f)(\bar{x})\bigr)^{1/n} + \lambda_2 \vol_{n}\bigl(\mathcal{A}^{\hid}_{t}(\bar f)(\bar{y})\bigr)^{1/n}.
    \end{split}
\end{equation*}
Thus, taking into account that $\bar{x} \in {C}^{\hid}_{\theta_1,t}(\bar f), \bar{y} \in {C}^{\hid}_{\theta_2,t}(\bar f)$, we deduce that
\begin{equation*}
    \begin{split}
         \vol_n(\mathcal{A}^{\hid}_{t}(\bar f)(\lambda_1\bar{x} + \lambda_2 \bar{y}))^{1/n} &\geq (1 - \lambda_1 - \lambda_2) M_t^{1/n} + \lambda_1 \theta_1 M_t^{1/n} + \lambda_2 \theta_2 M_t^{1/n}\\
         &= \bigl( 1 - \lambda_1(1 - \theta_1^{1/n}) - \lambda_2(1 - \theta_2^{1/n})\bigr) M_t^{1/n},
    \end{split}
\end{equation*}
concluding the proof.
\end{proof}
\begin{remark}
    By taking $\theta_1=\theta_2=\theta_0$ and $\lambda_1+\lambda_2 = \frac{1-\theta^\frac{1}{n}}{1-\theta_0^\frac{1}{n}}$ in Lemma \ref{l:theta_convolution}, one obtains for every $0 \leq \theta_0 \leq \theta < 1$, that
    \begin{equation}\label{e:monotonicity_theta_convolution}
        \frac{\mathcal{C}^{\hid}_{\theta_0,t}(\bar f)}{1 - \theta_0^{1/n}} \subseteq \frac{\mathcal{C}^{\hid}_{\theta,t}(\bar f)}{1 - \theta^{1/n}}.
    \end{equation}
\end{remark}

We are now finally ready to prove the main results of this section. We start by providing the inequality for $\hid +1$ functions.
\begin{proof}[Proof of Theorem~\ref{t:hi_fun_RS}]   
    By homogeneity of the inequality, we may dilate the functions $f_i$ so that $\|f_i\|_\infty = 1$ for all $i=0,1,\dots,\hid$. We write $$C_{\theta,t}^\hid = C_{\theta,t}^\hid(\bar f).$$ 
    For any $t\in (0,1]$, let $$\bar x(t)=(x_1(t),\dots,x_m(t))\in \R^{nm}$$ be such that $M_t = \vol_n(\mathcal{A}^{\hid}_{t}(\bar f)(\bar{x}(t)))$.

Firstly, it follows from  \eqref{e:monotonicity_theta_convolution}, with $\theta_0=0$ and each $f_i$ in $\bar f$ replaced by $f_i(\cdot +x_i(t))$, that $$(1 - \theta^{1/n})(\mathcal{C}^m_{0,t} - \bar x(t)) \subset \mathcal{C}^m_{\theta,t}$$ for all $\theta \in [0,1]$. Thus, we have that
    $$
    \int_0^1 (1 - \theta^{1/n})^{nm}\vol_{nm}(\mathcal{C}^m_{0,t})\,\dlat \theta \leq \int_0^1 \vol_{nm}(\mathcal{C}^m_{\theta,t})\,\dlat \theta.
    $$
    Therefore, from the definition of $M_t$,
    \begin{equation*}
        \begin{split}
            \vol_{nm}(\mathcal{C}^m_{0,t}) &\leq \binom{n(\hid + 1)}{n} \int_0^1 \vol_{nm}(\mathcal{C}^m_{\theta,t})\,\dlat \theta\\
            & = \binom{n(\hid + 1)}{n} \int_{\R^{n\hid}} \frac{\vol_n\bigl(\mathcal{A}_t^m(\bar f)(\bar{x})\bigr)}{M_t} \, \dlat \bar{x}.
            \end{split}
    \end{equation*}
    Now, on the one hand, integrating with respect to the variable $t$, we deduce that 
    \begin{equation}
    \label{eq:up_m_t_bound}
        \begin{split}
            \int_0^1 M_t\,\vol_{nm}(\mathcal{C}^m_{0,t}) \, \dlat t &\leq \binom{n(\hid + 1)}{n} \int_0^1 \int_{\R^{n\hid}} \vol_n\bigl(\mathcal{A}_t^m(\bar f)(\bar{x})\bigr) \, \dlat \bar{x} \, \dlat t\\
            &= \binom{n(\hid + 1)}{n} \int_{\R^n} \int_{\R^{n\hid}} \!\!\!\!f_0(z) \prod_{i=1}^{m}f_i(z-x_i) \,\dlat\bar{x}\, \dlat z\\
            &= \binom{n(\hid + 1)}{n} \left(\prod_{i = 0}^\hid \int_{\R^n}f_i(x)\,\dlat x\right).
        \end{split}
    \end{equation}

    On the other hand, 
    \begin{equation}
    \label{eq:m_t_int}
        \begin{split}
            \int_0^1 &M_t\,\vol_{nm}(\mathcal{C}^m_{0,t}) \, \dlat t = \int_{\R^{n\hid}} \int_0^{(\bar{f})_{\star_{\hid}}(\bar{x})} \max_{\bar{y}} \vol_{n}\bigl(\mathcal{A}_t^m(\bar f)(\bar{y})\bigr)  \, \dlat t\,\dlat \bar{x}
            \\
            &\geq\max_{\bar{y}} \int_{\R^{n\hid}} \int_0^{(\bar{f})_{\star_{\hid}}(\bar{x})} \vol_{n}\bigl(\mathcal{A}_t^m(\bar f)(\bar{y})\bigr)  \, \dlat t \,\dlat \bar{x}\\
            &\geq \max_{\bar{y}} \int_{(\R^n)^{m+1}} \min \left\{(\bar{f})_{\star_{\hid}}(\bar{x}), f_0(z)\prod_{i=1}^m f_i(z-y_{i})\right\} \,\dlat z \,\dlat \bar{x},
        \end{split}
    \end{equation}
    where we used the definition of $\mathcal{A}_t^m(\bar f)(\bar{y})$. Recalling that $\|f_i\|_\infty = 1$ for all $i$, we see that we are considering in the final line of \eqref{eq:m_t_int} the minimum of functions less than $1$; this minimum is therefore greater than their product, and so our computation continues as
    \begin{equation}
    \label{eq:lower_m_t_bound}
        \begin{split}
            \int_0^1 M_t\,\vol_{nm}(\mathcal{C}^m_{0,t}) \, \dlat t &\geq \max_{\bar{y}} \int_{\R^n} \!\!\!f_0(z)\prod_{i=1}^m f_i(z-y_{i}) \,\dlat z \int_{\R^{n\hid}}\!\!\!\!\!(\bar{f})_{\star_{\hid}}(\bar{x})\dlat \bar{x}\\
            &= \|(\bar f)_{\oplus_{\hid}} \|_{\infty} \int_{\R^{n\hid}}(\bar{f})_{\star_{\hid}}(\bar{x})\dlat \bar{x}.
        \end{split}
    \end{equation}
    Combining \eqref{eq:up_m_t_bound} with \eqref{eq:lower_m_t_bound} concludes the proof.
\end{proof}

\begin{proof}[Proof of Theorem~\ref{t:hi_fun_RS_one}]
First, we establish that, for $t\in (0,1)$,
$$
\mathcal{C}^{\hid}_{0,t} \subset D^{m}\bigl(\mathcal{A}^{\hid}_{t^{m+1}}(f)(\bar o)\bigr).
$$
Indeed, let $\bar x \in \mathcal{C}^{\hid}_{0,t}$. Then, $\mathcal{A}^{\hid}_{t^{m+1}}(f)(\bar o)$ is non-empty. Thus, there exists $z\in\R^n$ such that $f(z)f(z - x_1) \cdots f(z - x_\hid) \geq t \|f\|_\infty^{m+1}.$ But, this means that $f(z), f(z - x_1),\ldots,f(z - x_\hid) \geq t \|f\|_\infty$. Setting $x_0=0$, we deduce that $f(z-x_i)^{m+1}\geq t^{m+1}\|f\|_\infty^{m+1}$ for all $i=0,1,\ldots,m.$ By definition, we then have that $z-x_i \in \mathcal{A}^{\hid}_{t^{m+1}}(f)(\bar o)$ for all $i$, or equivalently, 
\[
\bar x \in \mathcal{A}^{\hid}_{t^{m+1}}(f)(\bar o) \cap (\mathcal{A}^{\hid}_{t^{m+1}}(f)(\bar o)+x_1)\cap\cdots \cap (\mathcal{A}^{\hid}_{t^{m+1}}(f)(\bar o)+x_m).
\]
This is precisely that $\bar x \in D^m\left(\mathcal{A}^{\hid}_{t^{m+1}}(f)(\bar o)\right).$ Consequently, by using the $m$th-order Rogers-Shephard inequality \eqref{eq:RSell}, we obtain
$$
\vol_{nm}(\mathcal{C}^{\hid}_{0,t}) \leq \binom{n(\hid + 1)}{n} \vol_{n}(\mathcal{A}^{\hid}_{t^{m+1}}(f)(\bar o))^m.
$$

Now, by Minkowski's inequality for integrals, we infer
\begin{equation*}
    \begin{split}
        \int_0^1 \vol_{n}(\mathcal{A}^{\hid}_{t^{m+1}}(f)(\bar o))^m \,\dlat t &= \int_0^1 \left(\int_{\R^n} \chi_{_{\{f/\|f\|_\infty \geq t\}}}(z)\,\dlat z\right)^m \,\dlat t\\
        &\leq \left(\int_{\R^n} \left(\int_0^1 \chi_{_{\{f/\|f\|_\infty \geq t\}}}(z)\,\dlat t\right)^\frac{1}{m} \,\dlat z\right)^m\\
        &= \frac{1}{\|f\|_\infty} \left(\int_{\R^n} f(z)^\frac{1}{m}\,\dlat z\right)^m = \frac{\|f\|_\frac{1}{m}}{\|f\|_\infty}.
    \end{split}
\end{equation*}
Moreover, taking into account that $\mathcal{A}_t^m (f)(\bar{x}) \neq \emptyset$ if and only if $$t \leq (\bar{f})_{\star_{\hid}}(\bar{x})/\|f\|_\infty^{m+1},$$ we obtain
\begin{equation*}
    \begin{split}
        \int_0^1 \vol_{nm}(\mathcal{C}^{m}_{0,t}) \,\dlat t &= \int_0^1 \int_{\R^{nm}} \chi_{_{\{\bar{x}:\mathcal{A}_t^m (f)(\bar{x}) \neq \emptyset\}}} \, \dlat \bar{x} \,\dlat t\\
        &= \int_{\R^{nm}} \int_0^1 \chi_{_{\{t:\mathcal{A}_t^m (f)(\bar{x}) \neq \emptyset\}}}\,\dlat t \, \dlat \bar{x}\\
        &= \int_{\R^{nm}} \int_0^{(\bar{f})_{\star_{\hid}}(\bar x) /\|f\|_\infty^{m+1}} \dlat t \,\dlat \bar{x}\\
        &= \frac{1}{\|f\|_\infty^{m +1}}\int_{\R^{nm}} (\bar{f})_{\star_{\hid}}(\bar{x}) \,\dlat \bar{x},
    \end{split}
\end{equation*}
as desired. 
\end{proof}

\begin{proof}[Equality case in Theorem \ref{t:hi_fun_RS_one}]
    Fix $\hid \in \N \setminus \{ 1 \}$, as the $\hid=1$ case is already established (\cite[Lemma 5.1]{AlGMJV}).  On the one hand, if $f$ is the characteristic function of an $n$-dimensional simplex, \eqref{e:hi_fun_RS_one} becomes the $m$th-order Rogers-Shephard inequality \eqref{eq:RSell} for an $n$-dimensional simplex, which is precisely the equality case. 

    On the other-hand, equality in  \eqref{e:hi_fun_RS_one} implies equality in the following:
    \[
    \begin{split}
    \int_0^1\vol_{n\hid}(\mathcal{C}^{\hid}_{0,t})\,\dlat t&\leq \int_0^1 \vol\Bigl(D^{m}\bigl(\mathcal{A}^{\hid}_{t^{m+1}}(f)(\bar o))\bigr)\Bigr) \,\dlat t 
    \\
    &\leq \int_0^1 \binom{n(\hid + 1)}{n} \vol_n(\mathcal{A}^{\hid}_{t^{m+1}}(f)(\bar o)) \, \dlat t.
    \end{split}
    \]
    Therefore, there is equality in 
    $$
    \vol_{n\hid}(\mathcal{C}^{\hid}_{0,t}) \leq \vol\Bigl(D^{m}\bigl(\mathcal{A}^{\hid}_{t^{m+1}}(f)(\bar o)\bigr)\Bigr) \leq \binom{n(\hid + 1)}{n} \vol_n(\mathcal{A}^{\hid}_{t^{m+1}}(f)(\bar o))^m.
    $$
Using again the equality conditions in \eqref{eq:RSell} one has that, for almost every $t \in (0,1)$, $\mathcal{A}^{\hid}_{t^{m+1}}(f)(\bar o)$ is an $n$-dimensional simplex. We next use the fact that equality implies equality in our use of Minkowski's integral inequality. We obtain that the function $(z,t)\mapsto \chi_{\{f/\|f\|_\infty \geq t\}}(z)$ equals $h(t)q(z)$ for some measurable functions $h:(0,1]\mapsto\R_+$ and $q:\R^n\mapsto\R_+$ almost everywhere.

Since $f$ is upper semi-continuous and integrable, the set $\{f = \|f\|_\infty\}$ must be compact, and, since $f$ is $\log$-concave, the super-level sets of $f$ are convex and $f$ is continuous when restricted to the interior of its support. We deduce that the equality holds on $\R^n\times (0,1]$, in which case we must have $h(t)$ is a constant (merely pick a $z_0\in \{f=\|f\|_\infty\}$ to obtain $1=h(t)q(z_0)$ for all $t\in (0,1]$). Additionally this means, for a fixed $z$, the value of $\chi_{\{f/\|f\|_\infty \geq t\}}(z)$ is the same for all $t$. 

In terms of $f$, this means if $z$ is such that $f(z)\geq t\|f\|_\infty$ for some $t\in (0,1]$, then $ f(z)\geq t\|f\|_\infty$ for all $t\in (0,1]$. We deduce that $f(z)=\|f\|_\infty$ for all such $z$. Likewise, if $z$ is such that $f(z) < t\|f\|_\infty$ for some $t\in (0,1] \rightarrow f(z)< t\|f\|_\infty$ for all $t\in (0,1],$ and so $f(z)=0$. Therefore, $f$ partitions $\R^{n}$ into two sets: a convex body upon which $f(z)=\|f\|_\infty$, and the rest of $\R^n$. In particular, the latter is such that $f(z)=0$, i.e. $f$ is the characteristic function of a convex body. We now use that, for $t\in (0,1)$, $\mathcal{A}^{\hid}_{t^{m+1}}(f)(\bar o)=\{f/\|f\|_\infty \geq t\}=\{f = \|f\|_\infty\}$ to deduce that this convex body is an $n$-dimensional simplex.
\end{proof}

\appendix
\section{Some technical poofs}
In Section~\ref{sec:class}, there were three propositions whose proofs were technical and dry. We establish them now.

\begin{proof}[Proof of Proposition~\ref{p:increasing_mono}]
     Let $0<p<q$ be fixed such that $I_p(\psi)<\infty$ and $I_q(\psi)<\infty$. Assume, without loss of generality, that $\|\psi\|_\infty=1$. Let $a=I_p(\psi)$ and $\varphi(r)=pr^{p-1}(\psi(r)-1_{[0,a]}(r))$. Notice that $\varphi\le 0$ on $[0,a]$, $\varphi\ge 0$ on $[a,\infty)$ and $\int_0^{\infty}\varphi(r)\;\dlat r=0$. Thus
\[
I_q(\psi)^q-I_q(1_{[0,a]})^q=\frac{q}{p}\int_0^{\infty}r^{q-p}\varphi(r)\; \dlat r=\frac{q}{p}\int_0^{\infty}(r^{q-p}-a^{q-p})\varphi(r)\; \dlat r\ge 0,
\]
since the integrand is non negative on $\R_{+}$. We conclude that
\[
I_q(\psi)^q\ge I_q(1_{[0,a]})^q=a^q=I_p(\psi)^q.
\]
Clearly, there is equality if and only if $\psi=\chi_{[0,a]}$ almost everywhere.

 We next consider $-1<p<q<0$. Let $\varphi(r)=\psi(0)-\psi(1/r)$. From the fact that $\psi$ decays from its maximum at the origin to zero, we have $\|\psi\|_\infty=\|\varphi\|_\infty$. Then, for $p\in (-1,0)$, the following identity holds by applying a change of variables
 \begin{align*}
      I_p(\psi) &= \left(\frac{p}{\|\psi\|_\infty}\int_0^{\infty}(\psi(r)-\psi(0))\; r^{p-1 }\dlat r\right)^\frac{1}{p} 
      \\
      &=  \left(\frac{|p|}{\|h\|_\infty}\int_0^{\infty}\varphi(r)\; r^{|p|-1 }\dlat r\right)^{-\frac{1}{|p|}} = I_{|p|}(\varphi)^{-1}.
 \end{align*}
 Picking $p<q<0$, we have $0<|q|<|p|$. By applying the first part, we have $$I_p(\psi) =I_{|p|}(\varphi)^{-1} \leq I_{|q|}(\varphi)^{-1} = I_q(\psi),$$
 as claimed. The equality conditions are immediate.
\end{proof}

\begin{proof}[Proof of Proposition~\ref{p:mellin_finite_supp}]
    Begin by writing, for $p>0$,
\begin{equation}
\label{eq:mellin_limit_proto}
\left(p\Mel{\psi}{p}\right)^\frac{1}{p} = R \left(\int_0^R\psi(r)\, \frac{p r^{p-1}\dlat r}{R^p}\right)^\frac{1}{p}.
\end{equation}
Since $\psi\leq \|\psi\|_\infty = 1$ almost everywhere, it is easy to see from \eqref{eq:mellin_limit_proto} that
\[
\left(p\Mel{\psi}{p}\right)^\frac{1}{p} \leq R.
\]
On the other hand, by the definition of essential support, for every $\delta \in (0,R),$ the set
\[
A_\delta = \{r\in (R-\delta,R): \psi(r)>0\}
\]
has positive measure. Let $M_\delta = \operatorname{ess}\sup_{r \in A_\delta}\psi(r) \in (0,1]$ and let $m_\delta \in (0,M_\delta)$. Then, the set
\[
E_\delta = \left\{r\in (R-\delta,R):\psi(r)\geq m_\delta\right\}
\]
has positive measure. Therefore, 
\begin{equation}
\label{eq:mellin_limit_proto_2}
\begin{split}
\left(p\Mel{\psi}{p}\right)^\frac{1}{p} &\geq R\left(\int_{E_\delta} \frac{p m_\delta r^{p-1}\dlat r}{R^p}\right)^\frac{1}{p}
\\
&\geq R\left(\frac{p}{R^p}\vol_1(E_\delta)m_{\delta}\right)^\frac{1}{p}\left(R-\delta\right)^{1-\frac{1}{p}}
\\
&=R\left(\frac{p}{R-\delta}\vol_1(E_\delta)m_{\delta}\right)^\frac{1}{p}\left(1-\frac{\delta}{R}\right).
\end{split}
\end{equation}
For sufficiently small $\delta$, the term adjacent to $R$ on the right-hand side of \eqref{eq:mellin_limit_proto_2} will converge to $1$ as $p\to \infty$. We deduce the inequality
\[
\lim_{p\to\infty}\left(p\Mel{\psi}{p}\right)^\frac{1}{p} \geq R\left(1-\frac{\delta}{R}\right).
\]
By sending $\delta\to 0$, we deduce the claim.
\end{proof}

\begin{proof}[Proof of Proposition~\ref{p:limit_body}]
Observe that, from \eqref{eq:alt_mellin}
\begin{align*}\lim_{p\to(-1)^+}(1+p)^\frac{1}{p}\rho_{K_p(g)}(\theta)&=\lim_{p\to (-1)^+}\left((1+p)\int_0^{\infty}\left(-\frac{\partial}{\partial r}\frac{g(r\theta)}{\|g\|_\infty}\right)r^{p}\; \dlat r\right)^\frac{1}{p}.\end{align*}
Therefore, our claim follows if we can show that for every $\theta\in\s$, $$\varphi(r) = \left(-\frac{\partial}{\partial r}\frac{g(r\theta)}{\|g\|_\infty}\right), \quad r>0,$$ satisfies the hypotheses of Lemma~\ref{l:fractional_deriv}. 

Since $g$ is log-concave, there exists, for a fixed $\theta\in\s$, a convex function $V:\R_+\longrightarrow\R_+$ such that $g(r\theta)=e^{-V(r)}$ on $[0,\rho_{\supp(g)}(\theta)]$. For brevity, we write primes for the one-sided derivatives of $V$. Observe that $\frac{\partial}{\partial r}g(r\theta)=-g(r\theta)V^\prime(r)$, and therefore we can write
\[
\varphi(r) = \frac{g(r\theta)V^\prime(r)}{\|g\|_\infty}, \quad r>0.
\]
Since $g$ is log-concave and $V$ is convex, $\varphi$ is measurable. Since $g$ obtains its maximum at the origin, and therefore $V$ obtains its minimum at the origin, $V^\prime(r)\geq 0$ for $r>0$. We deduce $\varphi$ is non-negative. Recalling also that $V$ has one-sided derivatives everywhere on its domain, we have that $V^\prime(0)$ exists (since primes are denoting one-sided derivatives), and we extend $\varphi$ to $r=0$ by continuity: $\varphi(0)=V^\prime(0)$. It remains to show that  $\int_0^{\infty} t^{p_0} \varphi(t) \mathrm{d} t<\infty$ for some $p_0 \in(-1,0)$. We will show that any $p\in (-1,0)$ works.

Since $V$ is convex, its different quotients are monotonic: fix $\delta>0$ so that $g(r\theta)$ is positive on $(0,\delta]$. Then, for every $r\in(0,\delta] $, we have
\[
0 \leq \frac{V(r)-V(0)}{r} \leq \frac{V(\delta)-V(0)}{\delta},
\]
where we used that $V(r)-V(0)\geq 0$ is positive, since $V$ attains its minimum at the origin. Sending $r\to 0^+$, we deduce
\[
0\leq V^\prime(0) \leq \frac{V(\delta)-V(0)}{\delta},
\]
which is finite. Since $V$ is convex, $V^\prime$ is increasing, therefore, using again that $V(0)$ is the minimum of $V$, we have for almost all $r\in (0,\delta],$ $$V^\prime(0) \leq V^\prime(r) \leq \frac{V(\delta)-V(0)}{\delta},$$ which yields that $V^\prime$ is bounded from above and below on $(0,\delta]$, and, therefore, so too is $\varphi.$ Thus, for every $p\in (-1,0)$,
$$\int_0^{\delta}\varphi(r)r^p  \; \dlat r < \infty.$$
On $(\delta,\infty)$, we use by Proposition~\ref{p:equivalence} that an integrable, log-concave function is bounded by an exponential; in particular, it decays to zero. For almost every $R>\delta$, we have
\begin{align*}
    \int_{\delta}^R \left(-\frac{\partial}{\partial r}g(r\theta)\right)  \; \dlat r  = g(\delta \theta)-g(R\theta).
\end{align*}
Taking the limit as $R\to\infty$, we deduce from Fatou's lemma
\[
\int_{\delta}^{\infty} \left(-\frac{\partial}{\partial r}g(r\theta)\right)  \; \dlat r  \leq g(\delta \theta).
\]
Therefore, using that $r^p \leq \delta^p$ on $(\delta,\infty)$ since $p\in (-1,0)$,
\[
\int_{\delta}^{\infty} \varphi(r)  r^p\; \dlat r  \leq \delta^p \frac{g(\delta \theta)}{\|g\|_\infty}.
\]
We conclude the proof.
\end{proof}

{\bf Acknowledgments:} This work was initiated while all three authors were in-residence at the Hausdorff Research Institute for Mathematics in Bonn, Germany, during Spring 2024 for the Dual Trimester Program: "Synergies between modern probability, geometric analysis and stochastic geometry". The three authors express their sincere gratitude to the Hausdorff Institute, its dedicated staff, and the organizers of the trimester program for their invaluable support and hospitality.
\medskip

{\bf Funding:} The first-named author was supported by the U.S. National Science Foundation's MSPRF fellowship via NSF grant DMS-2502744. The second named author is funded by the grant PID2021-124157NB-I00, financed by MCIN/ AEI/10.13039 /501100011033/ ``ERDF A way of making Europe'', and by Comunidad Autónoma de la Región de Murcia a través de la convocatoria de Ayudas a proyectos para el desarrollo de investigación científica y técnica por grupos competitivos, incluida en el Programa Regional de Fomento de la Investigación Científica y Técnica (Plan de Actuación 2022) de la Fundación Séneca-Agencia de Ciencia y Tecnología de la Región de Murcia, REF. 21899/PI/22. The third-named author was supported by the Austrian Science Fund (FWF): 10.55776/P34446, and, in part, by the Gruppo Nazionale per l'Analisi Matematica, la Probabilit\'a e le loro Applicazioni (GNAMPA) of the Istituto Nazionale di Alta Matematica (INdAM). 

\bibliographystyle{acm}
\bibliography{references}

\end{document}